\def\cal{\mathcal}
\def\RR{\mathbb R}
\def\HH{\mathbb H}
\def\ts{{\cal T}(S)}
\def\a{\alpha}
\def\s{\sigma}
\def\L{{\cal L}}
\def\T{{\cal T}}
\def\da{d_\a(\nu^+,\nu^-)}
\def\dd{\partial}
\def\ep{\epsilon} 
\def\np{{\nu^+}}
\def\nm{{\nu^-}}
\def\Teich{Teichm\"uller }
\def\mul{\stackrel{{}_\ast}{\asymp}}
\def\add{\stackrel{{}_+}{\asymp}}
\def\ladd{\stackrel{{}_+}{\prec}}
\def\gadd{\stackrel{{}_+}{\succ}}
\def\lmul{\stackrel{{}_\ast}{\prec}}
\def\gmul{\stackrel{{}_\ast}{\succ}}
\def\t{\tau}
\def\G{{\cal G}}
\def\TS{{\rm Teich}(S)}
\def\teich{{\cal T}}
 \def\ML{\mathop{\cal{ ML}} (S)}
 \def\mod{\mathop{\rm{Mod}}}
\def\ext{\rm{Ext}}
\newtheorem{theorem}{Theorem}[section]
\newtheorem{definition}[theorem]{Definition}
\newtheorem{lemma}[theorem]{Lemma}
\newtheorem{proposition}[theorem]{Proposition}
\newtheorem{corollary}[theorem]{Corollary}
\newtheorem{introthm}{Theorem}
\begin{document}
\title{Lines of minima are uniformly quasi-geodesic}
\begin{abstract}
We continue the comparison between lines of minima and Teichm\"uller geodesics
begun in~\cite{crs}.
 For two measured laminations $\nu^+$ and $\nu^-$ that fill up a
  hyperbolizable surface $S$ and for $t \in (-\infty, \infty)$, let
  $\L_t$ be the unique hyperbolic surface that minimizes the length
  function $e^tl({\nu^+}) + e^{-t} l({\nu^-})$ on \Teich space. 
  We prove that the path $t \mapsto \L_t$ is a Teichm\"uller quasi-geodesic.
\end{abstract}

\author{Young-Eun Choi} 
\email{choiye@psu.edu}

\author{Kasra Rafi}
\email{rafi@math.uchicago.edu}

\author{Caroline Series}
\email{C.M.Series@warwick.ac.uk}
\maketitle
\date{}

 \section{Introduction}
\label{sec:introduction}
This paper continues the comparison between lines of minima and Teichm\"uller geodesics begun in~\cite{crs}. Let $S$ be a  hyperbolizable surface of finite type 
and $\ts$ be the \Teich space of $S$. Let $\nu^+$ and $\nu^-$ be two measured 
laminations that fill up $S$. The  associated \emph{line of minima} is the path 
$ t \mapsto \L_t \in \ts$,  where $\L_t=\L_t(\np,\nm)$ is  the unique hyperbolic 
surface that minimizes the length function $e^tl({\nu^+}) + e^{-t} l({\nu^-})$ on $\ts$, 
see~\cite{klminima} and Section~\ref{sec:linesofm} below.  Lines of minima have significance for hyperbolic $3$-manifolds:  infinitesimally bending $\L_t$ along the lamination $\nu^+$ results in a quasifuchsian group whose convex core boundary  has bending measures in the projective classes   $\nu^+$ and $\nu^-$ and in the ratio $e^{2t}:1$, see~\cite{series}. 
In this paper we prove:
 \begin{introthm}
 \label{thm:qgeo0}
The line of minima $\L_t$, $t\in \mathbb{R}$, is a quasi-geodesic with respect
to the \Teich metric.
In other words, there are universal constants
$c>1,C>0$, depending only on the
topology of $S$, such that 
  for any
$a,b \in \RR$ with $a<b$, we have
$$(b-a)/c -C \leq d_{\ts}(\L_a, \L_b) \leq c(b-a)+C,$$
where $d_{\ts}$ is the \Teich distance. 
\end{introthm}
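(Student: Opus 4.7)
The overall strategy is to compare $\L_t$ with the Teichm\"uller geodesic $\G_t$ whose vertical and horizontal measured foliations are (projectively) $\nm$ and $\np$, parameterized so that the transverse measures scale as $e^{-t}\np$ and $e^{t}\nm$. Since $\G_t$ is an arc-length parameterized geodesic, showing that $\L_t$ remains within a uniformly bounded $d_\teich$-distance of a bi-Lipschitz reparametrization of $\G_t$ would give the theorem with $c$ and $C$ depending only on $S$. The task is thus to promote the thick-part comparison of~\cite{crs} to one valid on all of $\teich(S)$, which amounts to controlling what $\L_t$ does on the active intervals of short curves.

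For the upper bound $d_\teich(\L_a,\L_b)\le c(b-a)+C$, I would use Kerckhoff's formula
$$d_\teich(X,Y)=\frac{1}{2}\log\sup_{\a}\frac{\ext_Y(\a)}{\ext_X(\a)},$$
and prove that for every simple closed curve $\a$, $\ext_{\L_b}(\a)/\ext_{\L_a}(\a)\lmul e^{2(b-a)}$. In the thick part, $\ext_{\L_t}(\a)\mul l_{\L_t}(\a)$, and the first-order minimization defining $\L_t$, together with the estimate $e^t l_{\L_t}(\np)\mul e^{-t}l_{\L_t}(\nm)\mul\sqrt{i(\np,\nm)}$, yields $l_{\L_t}(\a)\mul e^{t}i(\a,\np)+e^{-t}i(\a,\nm)$, giving the bound with room to spare. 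On subsurfaces or annular regions where $\L_t$ is thin, the extremal length is computed in Minsky's product-region coordinates, and one uses the $\cite{crs}$ description of the complex Fenchel--Nielsen coordinate of each short curve along $\L_t$ to bound its displacement in $\hyp$ by a uniform multiple of its displacement along $\G_t$.

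For the lower bound $(b-a)/c-C\le d_\teich(\L_a,\L_b)$, the plan is to exhibit, for each interval $[a,b]$, a test curve $\a$ with $\ext_{\L_b}(\a)/\ext_{\L_a}(\a)\gmul e^{2(b-a)}$. When the whole interval lies in the thick part one takes $\a$ to be a simple closed curve approximating $\nm$: the estimates above force $\ext_{\L_t}(\nm)\mul l_{\L_t}(\nm)^2\mul e^{2t}$, so the ratio is of the required order. When $[a,b]$ meets a thin region for some curve $\b$, one instead takes $\a=\b$: on the active interval of $\b$, the product-region picture says that the imaginary part of the complex Fenchel--Nielsen coordinate $\ext_{\L_t}(\b)^{-1}$ undergoes hyperbolic displacement comparable to $b-a$, hence $\ext_{\L_b}(\b)/\ext_{\L_a}(\b)$ is at least a definite fraction of $e^{2(b-a)}$. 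Combining the thick and thin contributions by breaking $[a,b]$ into sub-intervals, one extracts a single test curve realizing the supremum up to uniform constants.

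The main obstacle is the quantitative thin-part analysis. Precisely, one must show that when $\L_t$ enters the $\ep$-thin region for a curve $\b$, the pair $(l_{\L_t}(\b),\text{twist}_\b(\L_t))$ tracks, up to uniform multiplicative-additive error in the $\hyp$-metric of Minsky's model, the corresponding track of $\G_t$ on a comparable interval of parameters. This requires a refinement of the active-interval estimates of~\cite{crs}: one needs effective bounds relating the derivatives of $l(\np)+l(\nm)$ and twist along the line of minima to those along the Teichm\"uller geodesic, to rule out $\L_t$ lingering in the thin part much longer, or cutting across it much faster, than $\G_t$. Once this thin-part tracking is established, the upper and lower bounds assemble from the thick-part and thin-part contributions additively, yielding the stated quasi-isometry constants.
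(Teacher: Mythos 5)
Your proposal hinges on the idea that $\L_t$ should coarsely fellow-travel the Teichm\"uller geodesic $\G_t$ after a bi-Lipschitz reparametrization, and that the lower bound can then be read off from the annular factor in Minsky's product-region coordinates whenever $\L_t$ is thin. Both of these break down, and the breakdown is exactly what makes the theorem genuinely harder than a refinement of the thick-part comparison.

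First, the fellow-traveling picture is refuted by the results it is meant to refine: Theorem~\ref{thm:GLdistance} shows that $d_{\ts}(\L_t,\G_t)$ is unbounded on intervals where $K_t(\a)$ dominates $D_t(\a)$, because the short curve $\a$ satisfies $l_{\G_t}(\a)\asymp 1/\log K_t(\a)$ while $l_{\L_t}(\a)\asymp 1/\sqrt{K_t(\a)}$. There is no reparametrization of $\G$ along which $\L_t$ stays uniformly close; the two paths enter the thin part of $\T(S)$ to genuinely different depths, and the ratio of depths is unbounded. So the whole strategy of ``promote the thick-part comparison to all of $\T(S)$'' via a tracking estimate cannot succeed. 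Second, and more concretely, your lower-bound step in the thin regime asserts that when $\b$ is short, the point $\Pi_\b(\L_t)\in\HH^2_\b$ undergoes hyperbolic displacement comparable to $b-a$. This is false precisely in the $K$-dominated case: Lemma~\ref{lem:HLdistance}(ii) shows that when $\sqrt{K_t(\b)}\ge D_t(\b)$ on $[v,w]$ one only has $d_{\HH^2_\b}(\L_v,\L_w)\ladd (w-v)/2$, an \emph{upper} bound with a loss of a factor of two, and no matching lower bound. The annular factor simply does not carry the distance in this regime, and a single test curve argument via Kerckhoff's formula cannot recover it.

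The missing ingredient is Theorem~\ref{thm:Klarge0}: on intervals where $K_t(\a)>M$, the distance $b-a$ is realized, up to an additive error, in $\T(S_\a)$ rather than in $\HH^2_\a$. This is what allows the paper to trade a deficient annular contribution for a full contribution in the complementary subsurface, and the lower bound then requires an inductive peeling-off of curves (via the dichotomy in Lemma~\ref{lem:dichotomy}) to locate which factor of Minsky's product finally realizes a definite fraction of $b-a$; one curve at a time is not enough because the short-curve set changes along $[a,b]$ and the contributions can be spread across several factors. Your upper-bound sketch is closer in spirit to what works (it amounts to the coarse statement that $l_{\L_t}(\a)$ changes at rate at most $e^{\pm 2t}$, which the paper gets from Theorem~\ref{thm:GLshort} together with the rates of change of $D_t$ and $K_t$), though the asserted formula $l_{\L_t}(\a)\mul e^{t}i(\a,\np)+e^{-t}i(\a,\nm)$ is not established in~\cite{crs} and would itself need proof, and $\ext$ should be compared to $l^2$ rather than $l$ in the thick part.
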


An obvious way to approach this would be to compare the time-$t$ surface  $\L_t$
with the corresponding surface $\G_t$ on the \Teich geodesic whose horizontal
and vertical foliations at time $t$ are respectively, $e^t \np$ and $e^{-t}\nm$
\cite{G-M}.
In~\cite{crs}, we did just this. We showed that if neither surface $\L_t$
nor  $\G_t$ contains short curves, that is, they are both contained in the thick part of 
\Teich space, then the \Teich distance between them
is bounded above by a uniform constant that is independent of $t$. 
More generally, we showed that the set of curves which are short on the two surfaces coincide.  We also showed, however,  that the ratio of lengths of the same short curve on the two surfaces may be arbitrarily large,
so that  the path $\L_t$ may deviate arbitrarily far from $\G_t$.
It is therefore not immediately obvious how to derive Theorem~\ref{thm:qgeo0} from~\cite{crs}.
To explain our method, we first summarize the results of~\cite{crs}  in more detail.

\bigskip

It turns out that on both $\L_t$
and  $\G_t$, a curve $\a$ is short if and only if at least one of  two quantities  $D_t(\a)$ and $K_t(\a)$ is large. These quantities 
depend  on the topological relationship between $\a$ and  the defining laminations $\nu^+$ and $\nu^-$. 
They relate to 
the modulus of a maximal embedded annulus around $\a$; 
the modulus of a {\em flat} annulus is approximately $D_t(\a)$ and the 
modulus of an {\em expanding} annulus is approximately $\log K_t(\a)$,
see~\cite{minskyharmonic} and Sections~\ref{sec:flat} and~\ref{sec:estimates}
below.
We say that a curve is \emph{extremely short} if it is less than some prescribed $\ep_0>0$ depending only on the topology of $S$,  see Section~\ref{sec:background}. The essential results in~\cite{crs} were the following estimates (see Section~\ref{sec:background}
for notation):
\begin{theorem}[\cite{crs} Theorems 5.10, 5.13, 7.13, 7.14]
\label{thm:GLshort}
Let $\a$ be a simple closed curve on $S$.
If $\a$ is extremely short on $\G_t$ then
$$\frac{1}{l_{\G_t}(\a)} \asymp \max\{ D_t(\a), \log K_{t}(\a) \},$$
while if $\a$ is extremely short on $\L_t$ then
  $$\frac{1}{l_{\L_t}(\a)} \asymp \max \{ D_t(\a), \sqrt{K_t(\a)} \}.$$
\end{theorem}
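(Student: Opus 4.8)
\smallskip
\noindent\emph{Proof strategy.}\ The plan is, in each of the two cases, to pass from hyperbolic length to extremal length and then to the modulus of the widest annulus in the homotopy class of $\a$, and to show that this widest annulus is, up to a bounded additive error in the modulus, the union of an essentially flat core cylinder of modulus comparable to $D_t(\a)$ together with two collars whose combined modulus is comparable to $\log K_t(\a)$ (on $\G_t$) and to $\sqrt{K_t(\a)}$ (on $\L_t$). Since $\a$ is extremely short, Maskit's comparison in the sharp form for short curves (see \cite{crs}) gives $l_{\G_t}(\a)\asymp\ext_{\G_t}(\a)$ and $l_{\L_t}(\a)\asymp\ext_{\L_t}(\a)$, so it suffices to carry this out for extremal length; the conclusion then follows from $x+y\asymp\max\{x,y\}$ for positive $x,y$.

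For $\G_t$ I would work with the flat metric $|q_t|$ of the quadratic differential whose horizontal and vertical foliations are $e^t\np$ and $e^{-t}\nm$. By the flat thick/thin description (\cite{minskyharmonic}), the widest annulus around $\a$ splits as a maximal flat cylinder $F_\a$ with core $\a$ together with one expanding annulus on each side. First I would check that $\Mod(F_\a)\asymp D_t(\a)$: the circumference of $F_\a$ is the flat length of $\a$ and its height records how the horizontal and vertical foliations wind across it, which is exactly what $D_t(\a)$ measures. Next I would show that the two expanding annuli together have modulus $\asymp\log K_t(\a)$, using that an expanding annulus has modulus comparable to the logarithm of the ratio of the $|q_t|$-diameters of its two boundary circles, and that along the geodesic these scales separate at a definite exponential rate as $t$ leaves the balance time of $\a$ (the $t$ at which $e^ti(\a,\np)=e^{-t}i(\a,\nm)$). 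The matching upper bound — no other annulus around $\a$ does better — holds because any embedded annulus is homotopic, with bounded loss of modulus, into this flat-plus-expanding configuration.

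For $\L_t$ there is no quadratic differential, and I would argue from the variational characterization: $\L_t$ minimizes $F(X)=e^tl_X(\np)+e^{-t}l_X(\nm)$, so $dF$ vanishes at $\L_t$ in every direction, in particular in the two Fenchel--Nielsen directions attached to $\a$. Differentiating in the twist direction and using Kerckhoff's cosine formula for $dl_X(\cdot)/d\tau_\a$ forces a balance between the windings of $\np$ and of $\nm$ about $\a$; the consequence is that the twisting part of the collar geometry on $\L_t$ is pinned down just as on $\G_t$, and again contributes a core-cylinder modulus $\asymp D_t(\a)$. Differentiating in the length direction $l_\a$ is the crux: with $\a$ short, the collar lemma gives $l_X(\mu)=i(\mu,\a)\,w(l_\a)+(\text{winding})+(\text{length outside the collar of }\a)$ with $w(l_\a)\asymp\log(1/l_\a)$, and substituting into $dF/dl_\a=0$ gives an equation for $l_{\L_t}(\a)$ in which the collar term, of size $\sim 1/l_\a$, is balanced against the winding and exterior terms; solving it yields $1/l_{\L_t}(\a)\asymp\max\{D_t(\a),\sqrt{K_t(\a)}\}$, the square root arising because in the $K_t$-dominated regime the balancing equation is quadratic rather than linear in $1/l_\a$, unlike the logarithmic-in-modulus behaviour on $\G_t$. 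The reverse inequality again comes from the variational inequality together with Maskit's lower bound for extremal length.

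I expect the main obstacle to be the $\L_t$ case, specifically the sharp analysis of $dF/dl_\a$: one has to control the contributions to $l_X(\np)$ and $l_X(\nm)$ from outside the collar of $\a$ — showing they are governed by $D_t(\a)$ and $K_t(\a)$ and carry no spurious dependence on $l_\a$ — and then extract not just the order of magnitude of $l_{\L_t}(\a)$ but the exponent $\tfrac12$ implicit in $\sqrt{K_t(\a)}$, which forces the collar and winding estimates to be sharp to the relevant order rather than merely up to multiplicative constants. A lesser difficulty shared by both halves is organizing the case split according to which competing quantity dominates, and matching the ``short'' and ``extremely short'' thresholds so that all comparison constants depend only on the topology of $S$.
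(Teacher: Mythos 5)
Your outline follows essentially the same route as \cite{crs}, which this paper cites rather than reproves: the $\G_t$ estimate is obtained by decomposing the maximal annulus around $\a$ as in Theorems~\ref{thm:basic} and~\ref{thm:modcomparison} into a flat cylinder of modulus $\asymp D_t(\a)$ (using Proposition~\ref{prop:twistcomparison} to translate between hyperbolic and $q$-metric twisting) plus expanding annuli of modulus $\asymp\log K_t(\a)$, while the $\L_t$ estimate is derived from the critical-point condition $dF=0$ in the Fenchel--Nielsen twist and length directions at $\a$, exactly as you describe. You correctly identify that the hard part is the final step in the $\L_t$ case where the exponent $\frac{1}{2}$ is extracted from the balancing equation; this is precisely the ``lengthy separate analysis'' the present paper alludes to in Section~\ref{sec:estimates}.
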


\begin{theorem} [\cite{crs} Theorem 7.15] 
\label{thm:GLdistance}
The \Teich distance between
$\L_t$ and $\G_t$ is given by
$$d_{\ts}(\L_t,\G_t)  \add \frac{1}{2} \log \max_\a \frac{l_{\G_t}(\a)}{l_{\L_t}(\a)}
,$$
where the maximum is taken over all simple closed curves $\a$ that are extremely short
in $\G_t$. In particular, the distance between the thick parts of $\L_t$ and  $\G_t$
is bounded.
\end{theorem}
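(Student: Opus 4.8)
The plan is to decompose $d_{\ts}(\L_t,\G_t)$ by means of Minsky's product regions theorem. Let $\Delta$ be the collection of simple closed curves that are extremely short on $\G_t$; these are pairwise disjoint, and, by the comparison results of \cite{crs}, the curves that are extremely short on $\G_t$ are, up to a universal adjustment of the cutoff, the same as those short on $\L_t$. Thus $\G_t$ and $\L_t$ both lie in the thin part of $\ts$ determined by $\Delta$, and the product regions theorem gives, up to a universal additive error,
\[
d_{\ts}(\L_t,\G_t)\ \add\ \max\Big\{\, d_0\,,\ \max_{\a\in\Delta}\, d_{\hyp}\big(\xi_\a(\L_t),\,\xi_\a(\G_t)\big)\Big\},
\]
where $d_0$ is the distance between the images of $\L_t$ and $\G_t$ in the \Teich space of $S$ cut along $\Delta$, and for $\a\in\Delta$ the point $\xi_\a(X)\in\hyp$ has imaginary part $\asymp 1/\ext_X(\a)$ and real part recording the twisting of $\a$ in $X$ (normalised so that a purely vertical move from imaginary part $v_1$ to $v_2$ costs $\tfrac12|\log(v_1/v_2)|$, in keeping with Kerckhoff's formula). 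The term $d_0$ is bounded by a universal constant: once $\Delta$ is removed no component of either surface carries a short curve, and on each such thick piece $\L_t$ and $\G_t$ are uniformly close by the thick-part comparison of \cite{crs}.

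It remains to estimate each annular factor $d_{\hyp}(\xi_\a(\L_t),\xi_\a(\G_t))$. The first ingredient is that on any hyperbolic surface $X$ an extremely short curve $\a$ satisfies $\ext_X(\a)\asymp l_X(\a)$: the standard collar of $\a$ has modulus $\gmul 1/l_X(\a)$, so $\mod_X(\a)\gmul 1/l_X(\a)$, while Maskit's inequality gives $\mod_X(\a)\lmul 1/l_X(\a)$; since $\ext_X(\a)\asymp 1/\mod_X(\a)$ the claim follows. Hence the imaginary parts of $\xi_\a(\G_t)$ and $\xi_\a(\L_t)$ are $\asymp 1/l_{\G_t}(\a)$ and $\asymp 1/l_{\L_t}(\a)$ respectively. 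The second ingredient — the crux of the proof — is that the \emph{twisting} of $\a$ agrees on the two surfaces up to a bounded number of full twists: although $l_{\G_t}(\a)$ and $l_{\L_t}(\a)$ may differ by an unbounded factor, the twist parameter of $\a$ is, by \cite{crs}, coarsely a combinatorial quantity depending only on the position of $\np$ and $\nm$ relative to $\a$, so it takes the same coarse value on $\L_t$ and on $\G_t$. A bounded change of the twisting number moves the real part of $\xi_\a$ by at most a bounded multiple of its imaginary part, hence changes $d_{\hyp}$ by a bounded amount. Combining the two ingredients,
\[
d_{\hyp}\big(\xi_\a(\L_t),\xi_\a(\G_t)\big)\ \add\ \tfrac12\Big|\log\frac{\ext_{\G_t}(\a)}{\ext_{\L_t}(\a)}\Big|\ \add\ \tfrac12\Big|\log\frac{l_{\G_t}(\a)}{l_{\L_t}(\a)}\Big|.
\]

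To conclude, apply Theorem~\ref{thm:GLshort}: for $\a\in\Delta$, $1/l_{\G_t}(\a)\asymp\max\{D_t(\a),\log K_t(\a)\}$ and $1/l_{\L_t}(\a)\asymp\max\{D_t(\a),\sqrt{K_t(\a)}\}$. Since $\sqrt{x}\ge\log x$ for all $x>0$, the second maximum is at least the first, so $l_{\G_t}(\a)\gmul l_{\L_t}(\a)$; hence $|\log(l_{\G_t}(\a)/l_{\L_t}(\a))|$ equals $\log(l_{\G_t}(\a)/l_{\L_t}(\a))$ up to a universal additive constant and is bounded below uniformly. Taking the maximum over $\a\in\Delta$, which absorbs the bounded term $d_0$, gives
\[
d_{\ts}(\L_t,\G_t)\ \add\ \tfrac12\,\log\,\max_{\a}\,\frac{l_{\G_t}(\a)}{l_{\L_t}(\a)},
\]
the maximum over all simple closed curves $\a$ extremely short on $\G_t$ (an empty maximum read as $1$). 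In particular, if no curve is extremely short the right-hand side vanishes, recovering the boundedness of the distance between the thick parts of $\L_t$ and $\G_t$.

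I expect the twisting comparison to be the main obstacle. There is a priori no reason for the twist parameter of $\a$ to be stable under replacing $\G_t$ by $\L_t$, given that the length is not; showing that it is — so that on each annular factor the only surviving discrepancy between the two surfaces is the length ratio, and the real parts of the $\xi_\a$ do not contribute — is precisely where the detailed analysis of lines of minima from \cite{crs} is needed.
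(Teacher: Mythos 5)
This theorem is not proved in the paper at all; it is imported verbatim as \cite{crs} Theorem 7.15. So there is no in-paper proof to compare against, but your sketch is a reasonable reconstruction of how the result must go, and it matches the toolkit (Minsky's product regions theorem plus the twist bounds of Theorems~\ref{thm:gtwist} and~\ref{thm:Ltwist}) that the paper uses elsewhere, e.g.\ in Lemma~\ref{lem:HLdistance}.

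The decomposition via $\Delta$, the thick-part comparison for the cut surface, $\ext_X(\a)\asymp l_X(\a)$ for short $\a$, and the final step $\sqrt{x}\geq\log x$ are all sound. The one place where your wording misrepresents what is actually true is the twist comparison. You assert that the twist of $\a$ ``takes the same coarse value on $\L_t$ and on $\G_t$,'' i.e.\ that $|s_\a(\L_t)-s_\a(\G_t)|\prec 1$. That is not what the twist estimates give, and I see no reason it should hold. Theorems~\ref{thm:gtwist} and~\ref{thm:Ltwist} give $Tw_{\G_t}(\nu^+,\a)\prec 1/l_{\G_t}(\a)$ and $Tw_{\L_t}(\nu^+,\a)\prec 1/l_{\L_t}(\a)$ (for $t>t_\a$; use $\nu^-$ otherwise), and combining with Lemma~\ref{lem:minskytwist} yields only
\[
|s_\a(\G_t)-s_\a(\L_t)|\;\ladd\;|tw_{\G_t}(\nu^+,\a)-tw_{\L_t}(\nu^+,\a)|\;\prec\;\frac{1}{l_{\G_t}(\a)}+\frac{1}{l_{\L_t}(\a)},
\]
which is of the order of the \emph{imaginary} parts of $\Pi_\a(\G_t)$, $\Pi_\a(\L_t)$, not $O(1)$. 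Fortunately this is exactly the bound you need: substituting it into Equation~\eqref{eqn:distance-in-H2} gives
\[
|s_\a(\G_t)-s_\a(\L_t)|^2\, l_{\G_t}(\a)\, l_{\L_t}(\a)\;\lmul\;\max\Bigl\{\frac{l_{\G_t}(\a)}{l_{\L_t}(\a)},\frac{l_{\L_t}(\a)}{l_{\G_t}(\a)}\Bigr\},
\]
so the twist term is absorbed into the length-ratio terms and the factor-by-factor estimate $d_{\HH^2_\a}(\L_t,\G_t)\add\tfrac12\log\bigl(l_{\G_t}(\a)/l_{\L_t}(\a)\bigr)$ follows. You should present this calculation explicitly rather than claiming the FN twist itself is coarsely constant between the two surfaces; your subsequent sentence (``a bounded multiple of its imaginary part'') suggests you have the right picture, but the intermediate claim ``same coarse value'' is false as stated and contradicts that sentence. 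With the twist comparison rewritten in this form, the argument is complete.
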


It follows from these  results, that along intervals on which either there are no short curves, or on which $D_t (\a)$ dominates for all short curves $\a$, the surfaces $\L_t$ and $\G_t$ remain a bounded distance apart. However  the path $\L_t$ may deviate arbitrarily far from $\G_t$ along time  intervals on which  $K_t(\a)$ is large and dominates $D_t (\a)$.  
The situation is complicated by the fact that as we move along $\L_t$, the family of curves which are short at a given point in time will vary with $t$,  so that the intervals along which different curves $\a$ are short will overlap.

\bigskip

In addition to the above results from~\cite{crs}, there are two main ingredients in the proof of 
 Theorem~\ref{thm:qgeo0}. The first is a detailed comparison of  the rates of change of $K_t (\a)$ and $D_t (\a)$ with $t$. Some simple estimates are made in Lemmas~\ref{lem:Kdecay}  and~\ref{lem:DandK}, with more elaborate consequences drawn   in Lemma~\ref{lem:HLdistance}  and especially Lemma~\ref{lem:dichotomy}. 
These results use Minsky's product regions theorem (see Theorem~\ref{thm:minskyproduct}), which allows us to  reduce calculations of distance in regions of  \Teich space in which a given family of curves is short, to straightforward estimates in $\HH^2$.
To apply  Minsky's theorem, we need not only to compare lengths but also twists. We rely on the bounds on twists  proved in~\cite{crs} and reviewed in Theorems~\ref{thm:gtwist} and~\ref{thm:Ltwist}; these enter in a crucial way into the proof of Lemma~\ref{lem:HLdistance}.

The second main ingredient is control of distance along intervals 
along which $K_t(\a)$ is large. Consider  the surface $S_\a$ obtained by cutting $S$ along a short curve $\a$ and replacing the two resulting boundary components by punctures. 
The  following rather surprising result, proved in Section 4, states that on intervals along which $K_t(\a)$ is large, we can estimate the \Teich distance by restricting to the \Teich space of  the surface 
 $S_\a$. In other words, the contribution to \Teich distance in Minsky's formula~\ref{thm:minskyproduct} due to the short curve $\a$ itself may be neglected, see Theorem~\ref{thm:Klarge} for a precise statement.
\begin{introthm}
\label{thm:Klarge0}
If $K_t(\a)$ is sufficiently large for all $t \in [a,b]$, the distance 
in $\T(S_\a)$ between the restrictions of $\G_a$ and $\G_b$ to
$S_\a$ is equal to $b-a$, up to an additive error that is bounded by a constant
depending only on the topology of $S$. 
\end{introthm}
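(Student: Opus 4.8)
The plan is to establish the upper bound $d_{\T(S_\a)}(\G_a|_{S_\a},\G_b|_{S_\a})\le(b-a)+C$ and the matching lower bound $d_{\T(S_\a)}(\G_a|_{S_\a},\G_b|_{S_\a})\ge b-a$ separately. The upper bound is soft; the content of the theorem is the lower bound, which we will obtain by feeding the laminations $\np$ and $\nm$ themselves into Kerckhoff's formula, after understanding how the quadratic differential defining $\G_t$ behaves when $S$ is cut along $\a$.

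For the upper bound, recall that since $K_t(\a)$ is large for every $t\in[a,b]$, Theorem~\ref{thm:GLshort} shows that $\a$ is extremely short on each $\G_t$; in particular $\G_a$ and $\G_b$ both lie in the region of $\ts$ in which $\a$ is short. On that region the map $X\mapsto X|_{S_\a}$, obtained by cutting $S$ along $\a$ and completing the two resulting boundary curves to cusps, is coarsely $1$-Lipschitz with additive error depending only on the topology of $S$; this is a consequence of Minsky's product regions theorem (Theorem~\ref{thm:minskyproduct}). Since $t\mapsto\G_t$ is parametrized at unit speed, $d_{\ts}(\G_a,\G_b)=b-a$, and the upper bound follows.

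For the lower bound, let $q_t$ be the area-one quadratic differential on $\G_t$ with horizontal and vertical foliations $e^t\np$ and $e^{-t}\nm$. Cutting the flat surface $(\G_t,q_t)$ along the $q_t$-geodesic representative of $\a$ and completing the two new boundary circles to punctures produces a quadratic differential $q_t^\a$ on $S_\a$ whose underlying conformal structure is, up to bounded distance, $\G_t|_{S_\a}$ (here we use that $\a$ is extremely short), and --- this is the point that must be checked --- whose horizontal and vertical foliations are $e^t\np_\a$ and $e^{-t}\nm_\a$, where $\np_\a$ and $\nm_\a$ are the $t$-independent measured laminations obtained by cutting $\np$ and $\nm$ along $\a$. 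These are nonzero: the hypothesis on $K_t(\a)$ provides a non-annular subsurface of $S_\a$ that both $\np$ and $\nm$ cross essentially, so neither is supported on $\a$. Since the horizontal and vertical foliations of any quadratic differential $q$ both have extremal length equal to $\mathrm{Area}(q)$ on the underlying surface, we obtain
$$\ext_{\G_t|_{S_\a}}(\np_\a)=e^{-2t}\,\mathrm{Area}(q_t^\a),\qquad \ext_{\G_t|_{S_\a}}(\nm_\a)=e^{2t}\,\mathrm{Area}(q_t^\a).$$
Now apply the inequality $d_{\T(S_\a)}(X,Y)\ge\tfrac12\bigl|\log\bigl(\ext_X(\g)/\ext_Y(\g)\bigr)\bigr|$ --- valid for every simple closed curve $\g$ and a direct consequence of Kerckhoff's formula --- with $X=\G_a|_{S_\a}$, $Y=\G_b|_{S_\a}$. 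Taking $\g=\nm_\a$ and then $\g=\np_\a$ yields the two lower bounds $\tfrac12\bigl|\,2(b-a)+\log\rho\,\bigr|$ and $\tfrac12\bigl|\,2(b-a)-\log\rho\,\bigr|$ for $d_{\T(S_\a)}(\G_a|_{S_\a},\G_b|_{S_\a})$, where $\rho=\mathrm{Area}(q_b^\a)/\mathrm{Area}(q_a^\a)$; their maximum equals $(b-a)+\tfrac12|\log\rho|\ge b-a$. Note that the unknown area ratio $\rho$ never has to be estimated: it cancels between the two choices of test lamination.

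The hard part will be the parenthetical claim above, that the ``restriction of $q_t$ to $S_\a$'' is a genuine quadratic differential on $S_\a$ carrying the foliations $e^{\pm t}\nu^\pm_\a$. This requires the fine structure of the thin part of $q_t$ around $\a$ --- the flat cylinder and the two expanding annuli analysed in Sections~\ref{sec:flat} and~\ref{sec:estimates} --- together with the observation that completing the cut surface alters the flat metric only in small neighbourhoods of the two new punctures, where the horizontal and vertical foliations are already in standard form, so that the transverse measures survive the cut unchanged, up to the scalars $e^{\pm t}$. A subsidiary point is to fix once and for all which point of $\T(S_\a)$ is denoted $\G_t|_{S_\a}$ and to verify that it agrees, up to bounded error, with the conformal structure underlying $q_t^\a$ as long as $\a$ is extremely short --- precisely the regime forced by the hypothesis on $K_t(\a)$. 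Combining the two bounds then gives $d_{\T(S_\a)}(\G_a|_{S_\a},\G_b|_{S_\a})=b-a$ up to an additive error depending only on the topology of $S$, as claimed.
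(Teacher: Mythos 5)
Your strategy is genuinely different from the paper's. The paper cuts out the maximal flat annulus $F_t(\a)$ and reglues along an isometry, producing a new quadratic differential $\overline q_t$ on $S$ (not on $S_\a$) in which $\a$ is still extremely short; it then applies Minsky's product regions theorem to $\overline\G_a,\overline\G_b$, shows the $\HH^2_\a$-contribution is only $O(\log(b-a))$ because $\overline q_t$ has no flat annulus about $\a$, and finally uses Minsky's extremal length lemma (Lemma~\ref{lem:extremal}) to show that $\overline\G_t$ and $\G_t$ project to nearby points of $\T(S_\a)$. Your upper bound is fine, and the lower-bound idea of testing the defining laminations in Kerckhoff's formula is appealing. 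But there is a genuine gap, precisely at the step you flag as ``the hard part'': $q_t^\a$ cannot be constructed as an integrable quadratic differential on $S_\a$ carrying horizontal and vertical foliations $e^{t}\np_\a$ and $e^{-t}\nm_\a$, because $\np_\a$ and $\nm_\a$ are not measured laminations on $S_\a$.

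Concretely, unless $\a$ is horizontal or vertical, both $\np$ and $\nm$ cross $\a$ with positive geometric intersection number (and in the exceptional cases at least one does). After cutting along the $q_t$-geodesic representative of $\a$, the leaves of such a lamination become arcs whose endpoints lie on the new boundary circles. By contrast, an integrable quadratic differential on a punctured surface has at worst a simple pole at each puncture, and the transverse measure of a small loop about the puncture tends to zero; whereas for your cut lamination a loop about the new puncture is isotopic to $\a$, so its $\np_\a$-transverse measure is comparable to $e^{t}\,i(\np,\a)$, which is bounded away from zero. Thus the boundary circles cannot be ``completed to punctures'' without altering the foliation in an essential way, nor is the alteration local as you assert: the expanding annuli that would have to be absorbed have $q_t$-width of order $K_t(\a)\,l_{q_t}(\a)$, which is large by hypothesis. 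Without a legitimate $q_t^\a$ the identities $\ext_{\G_t|_{S_\a}}(\np_\a)=e^{-2t}\mathrm{Area}(q_t^\a)$ and $\ext_{\G_t|_{S_\a}}(\nm_\a)=e^{2t}\mathrm{Area}(q_t^\a)$ have no content, and the lower bound collapses. The paper avoids this entirely by never attempting to transport the quadratic differential to $S_\a$: it stays on $S$ with the reglued $\overline q_t$, where the foliations are honest, and extracts the lower bound on $d_{\T(S_\a)}$ from Minsky's product regions theorem applied on $S$.
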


The proof of Theorem~\ref{thm:qgeo0} requires estimating upper and lower bounds for
$d_{\T(S)}(\L_a,\L_b)$ over very large time intervals $[a,b]$. Given the first of the two ingredients above, the upper bound is relatively straightforward. The lower bound  depends on Theorem~\ref{thm:Klarge0}. The actual application involves a rather subtle inductive procedure based on Lemma~\ref{lem:dichotomy} which shows that at least one term in Minsky's formula~\ref{thm:minskyproduct} 
always involves a contribution comparable to $b-a$.

 \bigskip
   
The paper is organized as follows. Section 2 gives standard background and introduces the twist $tw_{\sigma} (\xi,\a)$ of a lamination $\xi$ about a curve $\a$ with respect to a hyperbolic metric $\sigma$. We give the main estimates about twists from~\cite{crs}.
In Section 3,
we recall from \cite{crs} the definitions of   $D_t(\a)$ and $K_t(\a)$
and derive some elementary results about their rates of change with $t$.
In Section 4 we prove Theorem~\ref{thm:Klarge0}
and in Section 5 we prove Theorem~\ref{thm:qgeo0}.

\subsection*{Acknowledgment} We would like to thank the referee for helpful comments.
\section{Background}
\label{sec:background}
\subsection*{Notation}
Since we will be dealing mainly with coarse estimates, we want to
avoid heavy notation and keep track of constants which are universal, in that they do
not depend on any specific metric or curve under discussion.  For
functions $f,g$ we write $f\asymp g$ to mean
 that there are constants $c \geq 1,C\geq 0$, depending only on the
topology of $S$ and the fixed constant $\ep_0$ (see below), such that 
$$\frac{1}{c} g(x) - C \leq f(x) \leq c g(x) +C.$$
We use  $f \mul g$ and $f\add g$ to mean that these inequalities hold
 with  $C=0$ and $c=1$, respectively.
The symbols  ${\prec}$,
$\stackrel{{}_+}{\prec}$, $\stackrel{{}_\ast}{\prec}$, etc.,
are defined similarly.  
In particular, we write
 $X \prec 1$ to indicate $X$ is bounded above by a positive
constant depending only on the topology of $S$ and $\ep_0$.


\subsection*{Short curves}
Let $\cal C (S)$ denote the set of isotopy classes of non-trivial, non-peripheral
simple closed curves on $S$.
The length of the
geodesic representative of $\a \in \cal C (S)$ 
with respect to a hyperbolic metric $\s \in \ts$ will be denoted $l_\s(\a)$.
In our dealings with short curves we will have to make various assumptions to ensure the validity of our estimates, which all require that the length $l_\s(\a)$ of a `short' curve be less than various constants, in particular 
less than the Margulis constant. We suppose that $\ep_0>0$ is chosen 
once and for all to satisfy all needed assumptions, and  
say a simple closed curve $\a$ is 
\emph{extremely short} in $\s$ if $l_{\sigma}(\alpha) < \ep_0$.

 \subsection*{Measured laminations and \Teich space.}
We denote the space of measured laminations on $S$ by $\ML$ and  write $l_\s(\xi)$ for the hyperbolic length of
a measured lamination $\xi \in \ML$.  For $\xi \in
\ML$, we denote the underlying leaves by $|\xi|$.

\subsection*{Kerckhoff lines of minima}
\label{sec:linesofm}
Suppose that $\np,\nm \in \ML$ fill up $S$, meaning that  the
sum of (geometric) intersections 
$i(\np,\xi) + i(\nm,\xi) >0$ for all $\xi \in \ML$.  Kerckhoff~\cite{klminima}
showed that the sum of length functions
$$\sigma \mapsto l_\s(\nu^{+}) +l_\s(\nu^{-})$$ 
has a  unique global minimum on $\teich(S)$.
Moreover, as $t$ varies in $(-\infty,\infty)$, the minimum $\L_t \in
\teich(S)$ of $l(\nu_t^+)+l(\nu_t^-)$ for the measured laminations
$\nu^{+}_t= e^{t}\np$ and $\nu^{-}_t= e^{-t}\nm$ varies continuously
with $t$ and traces out a  path $t \mapsto \L_t$ called the {\em line
  of minima $\L(\np,\nm)$ of }$\nu^{\pm}$.

\subsection*{Teichm\"uller geodesics}
 A pair of laminations $\np,\nm \in \ML$ which fill up $S$ also defines a  \Teich geodesic $   \G= \G(\np,\nm)$.  The time-$t$ surface $\G_t \in \G$ is the unique Riemann surface
that supports a
quadratic differential $q_t$ whose horizontal and vertical foliations are the measured foliations corresponding to  $\nu_t^+$ and $\nu_t^-$ respectively, see \cite{G-M}, \cite{levitt}. 
Flowing distance $d$ along $\G$  expands
the vertical foliation by a factor $e^{d}$ and contracts
the horizontal foliation by $e^{-d}$. 
By abuse of notation, we
denote the hyperbolic metric on the surface $\G_t $ also by $\G_t$, 
and likewise denote the quadratic differential metric defined by $q_t$ also by $q_t$.
\subsection*{Balance time}
For a curve $\a \in \cal C(S)$ that is neither a component of the 
vertical nor the horizontal foliation, let $t_{\a}$ denote the
\emph{balance time} of $\a$ at which $i(\a,\nu^+_t) = i(\a,\nu^-_t)$. 
Along $ \G$, 
a curve is shortest near its balance time. More precisely, we have
the following proposition which follows 
from  ~\cite{rafi2} Theorem 3.1:
\begin{proposition}
\label{prop:aroundbalancetime}
Choose $\ep>0$ so that $\ep< \ep_0$ and suppose that $l_{\G_{t_\a}}(\a) < \ep.$
Let $I_\a = I_\a(\ep)$ be the maximal connected interval containing $t_\a$ such that
$l_{\G_t}(\a) < \ep$ for all $t \in I_\a$. Then there is a constant $\ep'>0$ depending
only on $\ep$ such that $l_{\G_t}(\a) \geq \ep'$ for all $t \notin I_\a$.  (If  $l_{\G_{t_\a}}(\a) \geq \ep$ then set  $I_\a = \emptyset$.)
\end{proposition}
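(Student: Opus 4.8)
The plan is to deduce Proposition~\ref{prop:aroundbalancetime} from Rafi's description of the thin parts of a \Teich geodesic, \cite[Theorem 3.1]{rafi2}, after an elementary reduction. First I would pass from hyperbolic to extremal length: there is a universal $\delta_0 \in (0,\ep_0)$ with $l_\s(\a) \asymp \ext_\s(\a)$ whenever $l_\s(\a) < \delta_0$, and since every estimate below only improves as $\ep$ decreases, I may assume $\ep < \delta_0$; it then suffices to bound $\ext_{\G_t}(\a)$ away from zero for $t \notin I_\a$. The only elementary input is the area bound: the $q_t$--geodesic representative of $\a$ has $q_t$--length at least $\max\{i(\a,\np_t),i(\a,\nm_t)\}$, and $\mathrm{Area}(q_t) = i(\np_t,\nm_t) = i(\np,\nm)$, so
$$
\ext_{\G_t}(\a)\ \geq\ \frac{\max\{e^{t}i(\a,\np),\, e^{-t}i(\a,\nm)\}^{2}}{i(\np,\nm)}\ =:\ M(t).
$$
Because $i(\a,\np_{t_\a})=i(\a,\nm_{t_\a})$, a direct computation gives $M(t) = e^{2|t-t_\a|}M(t_\a)$; in particular $\ext_{\G_t}(\a)\to\infty$ as $|t|\to\infty$, so $\{t : l_{\G_t}(\a)<\ep\}$ is contained in a bounded interval about $t_\a$.

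The substantive point is that $\a$ cannot become short a \emph{second} time, on a time interval separated from the one about $t_\a$. This is what \cite[Theorem 3.1]{rafi2} supplies: on the part of $\G$ where $\a$ is extremely short, $1/\ext_{\G_t}(\a)\asymp 1/l_{\G_t}(\a)$ is comparable to the modulus of the maximal annulus about $\a$, hence to $\max\{D_t(\a),\log K_t(\a)\}$ (this is Theorem~\ref{thm:GLshort}), and both $D_t(\a)$ and $\log K_t(\a)$ are large only near $t_\a$, decaying at a universally controlled rate as $|t-t_\a|$ grows --- for $K_t(\a)$ this decay is made explicit in Lemma~\ref{lem:Kdecay}. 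So $t\mapsto 1/\ext_{\G_t}(\a)$ is, up to universal constants, unimodal with peak within a bounded distance of $t_\a$; equivalently, there is a single interval $J_\a\ni t_\a$ outside of which $l_{\G_t}(\a)$ exceeds a universal constant, and inside of which $\{t : l_{\G_t}(\a)<\ep\}$ differs by a set of uniformly bounded length from the subinterval of $J_\a$ around $t_\a$.

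Granting this, the conclusion is immediate. If $l_{\G_{t_\a}}(\a)\geq\ep$ then $\{t : l_{\G_t}(\a)<\ep\}=\emptyset$, which is the convention $I_\a=\emptyset$; otherwise $I_\a$ is precisely the component of $\{t : l_{\G_t}(\a)<\ep\}$ containing $t_\a$. Choosing $\ep'>0$ to depend only on $\ep$ and the universal comparison constants, and small enough that the unimodal comparison of the previous paragraph forces $\{t : l_{\G_t}(\a)<\ep'\}$ inside that component, we get $l_{\G_t}(\a)\geq\ep'$ for all $t\notin I_\a$, which is the assertion. The one genuinely hard step is the input from \cite{rafi2} --- ruling out a second, separate interval on which $\a$ is very short. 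It cannot be obtained from the bound $\ext_{\G_t}(\a)\geq M(t)=e^{2|t-t_\a|}M(t_\a)$ alone, because the expanding-annulus parameter $K_t(\a)$, which governs how short $\a$ actually becomes near $t_\a$, may be enormous while $M(t_\a)$ is not correspondingly small, so $M$ gives no useful lower bound near the balance time; controlling $K_t(\a)$, hence $1/\ext_{\G_t}(\a)$, globally in $t$ is exactly what we take from \cite{rafi2}.
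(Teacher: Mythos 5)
Your proposal is correct and rests on the same key ingredient the paper invokes, namely Rafi's Theorem~3.1 of~\cite{rafi2}; the paper itself gives no proof beyond this citation, and your sketch (elementary area bound to confine the thin set, then the coarse unimodality of $\max\{D_t(\a),\log K_t(\a)\}$ coming from the decay in Lemma~\ref{lem:Kdecay} and the exact exponential decay of $D_t$, combined with the two-sided modulus--length comparison) is a reasonable unpacking of what that citation supplies. Your closing remark about why the elementary bound $\ext_{\G_t}(\a)\geq M(t)$ alone cannot rule out a second, separated short interval is also correct and well taken.
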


Curves which are components of the vertical foliation ($ i(\a,\nu^-)= 0$, called 
{\em vertical}) or the 
 horizontal  foliation ($ i(\a,\nu^+)= 0$, called {\em horizontal}) are exceptional but in general easier to handle.  In such cases, $t_{\a}$ is undefined. However, for
reasons of continuity, it is natural to adopt the convention that when $\a$ is vertical $t_\a=
 -\infty$ and when $\a$ is horizontal $t_\a = \infty$.
Moreover, the arguments used to prove Proposition~\ref{prop:aroundbalancetime} still hold;  when $\a$ is vertical (resp. horizontal), we define $I_{\a} = (-\infty, c)$ 
(resp. $I_{\a} = (d,\infty)$) to be the maximal interval
where $l_{\G_t}(\a) < \ep$.

  \subsection*{Flat and expanding annuli}
  \label{sec:flat}
  Let $\s$ be a hyperbolic metric and let $q$ be any quadratic differential metric
in the same conformal class.
Let $A$ be an annulus in $(S,q)$ with piecewise smooth boundary.
 The following notions are due to Minsky \cite{minskyharmonic}.
 We say $A$ is {\em regular} if the boundary components $\dd_0,\dd_1$
 are equidistant from one another and the curvature along
 $\dd_0,\dd_1$ is either non-positive at every point or non-negative
 at every point (see \cite{minskyharmonic} or \cite{crs} for details).
 We follow the sign convention that the
curvature at a smooth point of $\dd A$ is positive if the acceleration vector points
into $A$. Suppose $A$ is a regular annulus such that the total curvature of
$\dd_0$ satisfies
$\kappa(\dd_0)\leq 0$. Then, it follows from the 
Gauss-Bonnet theorem that $\kappa(\dd_1) \geq 0$.
We say $A$ is {\em flat} if $\kappa(\dd_0)=\kappa(\dd_1)=0$ and
 say $A$ is {\em expanding} if $\kappa(\dd_0) <0$,
and call $\dd_0$ the {\em inner} boundary and $\dd_1$ the {\em outer}
boundary.

A regular annulus is {\em primitive} if it contains no singularities
of $q$ in its interior.
It follows that a flat annulus is primitive and is isometric to a cylinder
obtained from a Euclidean rectangle by identifying one pair of parallel sides.
An expanding annulus that is primitive is 
coarsely isometric to an annulus bounded by two concentric circles in the plane.

The length of a curve $\a$ which is short in $(S,\s)$ can be estimated 
by the modulus of a primitive annulus around it:
\begin{theorem}[\cite{minskyharmonic} Theorem 4.5, \cite{crs} Theorem 5.3]
\label{thm:basic}
Suppose $\a \in \cal C (S)$ is extremely short in $(S,\s)$. Then for any
quadratic differential metric $q$ in the same conformal class as  $\s$, there is
an annulus $A$ that is primitive with respect to $q$ whose core is homotopic to
$\a$ such that
$$\frac{1}{l_\s(\a)} \asymp \mod (A).$$
\end{theorem}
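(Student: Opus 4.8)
The plan is to route the estimate through extremal length. Recall that the extremal length $\ext_\s(\a)$ depends only on the conformal class of $\s$, and that $1/\ext_\s(\a)$ equals the supremum of $\mod(B)$ over all embedded annuli $B\subset S$ whose core is homotopic to $\a$. Since $\a$ is extremely short for $\s$, the collar lemma gives an embedded annular collar of $\a$ of modulus $\succ 1/l_\s(\a)$, whence $1/\ext_\s(\a)\succ 1/l_\s(\a)$, while Maskit's comparison of hyperbolic and extremal length -- valid once $l_\s(\a)$ is below a universal constant -- gives $1/\ext_\s(\a)\prec 1/l_\s(\a)$; hence $1/\ext_\s(\a)\asymp 1/l_\s(\a)$. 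It therefore suffices to exhibit a \emph{single} annulus $A$ that is primitive with respect to $q$, with core homotopic to $\a$, and with $\mod(A)\succ 1/\ext_\s(\a)$; the reverse inequality $\mod(A)\prec 1/\ext_\s(\a)$ then holds automatically, since $A$ is one of the competing annuli $B$.

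To produce such an $A$, I would invoke Minsky's flat-and-expanding description of a maximal annular neighbourhood of $\a$ in $(S,q)$. Let $F$ be the maximal flat cylinder around $\a$ -- possibly degenerate, when the $q$-geodesic representative of $\a$ is a chain of saddle connections through singularities -- and let $E_0,E_1$ be the two maximal primitive expanding annuli adjacent to $F$, each grown outward until it first meets a singularity of $q$. Their union $\widehat A=E_0\cup F\cup E_1$ is an embedded annulus homotopic to $\a$. The substantive input I would need is that these three canonical annuli already account for essentially all of the extremal modulus of $\a$, that is, $\mod(F)+\mod(E_0)+\mod(E_1)\succ 1/\ext_\s(\a)$. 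Granting this, let $A$ be whichever of $F,E_0,E_1$ has the largest modulus; then $\mod(A)\ge\tfrac13\bigl(\mod(F)+\mod(E_0)+\mod(E_1)\bigr)\succ 1/\ext_\s(\a)\asymp 1/l_\s(\a)$, and $A$ is either flat or expanding, hence primitive, with core homotopic to $\a$, as required.

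\textbf{The main obstacle} is precisely the lower bound $\mod(F)+\mod(E_0)+\mod(E_1)\succ 1/\ext_\s(\a)$: that the flat and expanding annuli genuinely capture the extremal modulus of $\a$, rather than merely contributing a super-additive lower bound to it. Establishing it amounts to building a conformal metric on a nearly extremal embedded annulus that is adapted to the decomposition $E_0\cup F\cup E_1$, and that rests on Minsky's structural analysis of regular and expanding annuli in a quadratic differential metric -- the curvature dichotomy $\kappa(\dd_0)\le 0\Rightarrow\kappa(\dd_1)\ge 0$ used to organize a neighbourhood of $\a$ into flat and expanding pieces, the coarse isometry of a primitive expanding annulus with a round Euclidean annulus, and the bound on the number of singularities of $q$ in terms of the topology of $S$ (Gauss--Bonnet for the $q$-metric with cone points). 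The hypothesis that $\a$ is \emph{extremely} short is what keeps us in this regime: it places $\a$ deep in the thin part, with a definite embedded collar and no competing short topology nearby, so that the maximal flat-and-expanding neighbourhood behaves as the analysis demands. With that input in hand, the elementary properties of modulus assemble into the argument sketched above.
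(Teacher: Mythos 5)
Your reduction to extremal length (collar lemma giving $1/\ext_\s(\a)\succ 1/l_\s(\a)$, Maskit for the reverse) and the appeal to the maximal flat and expanding annuli around $\a$ match the route behind the quoted result; note that this theorem is cited from \cite{minskyharmonic} and \cite{crs} rather than proved in the present paper. You have, however, correctly located \emph{all} of the substance in the single estimate you flag as the ``main obstacle,'' namely
$$\mod(F)+\mod(E_0)+\mod(E_1)\ \succ\ \frac{1}{\ext_\s(\a)}.$$
This is precisely Minsky's Theorem~4.5 in \cite{minskyharmonic}, which says that any homotopically essential annulus of sufficiently large modulus contains a primitive sub-annulus whose modulus is at least one third of the original minus a universal constant; applied to a nearly extremal embedded annulus around $\a$, that primitive sub-annulus is necessarily flat or expanding and hence (by monotonicity of modulus under inclusion of concentric annuli) its modulus is dominated by $\mod(F)$ or $\mod(E_i)$, which yields exactly your inequality. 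So the ``granting this'' in your proposal is not a technical lemma but the entire theorem, and the proof behind it -- constructing a comparison metric adapted to the decomposition, controlling the region past the first singularities, and using the curvature dichotomy for regular annuli -- is genuinely where the work lies. Everything else you wrote (the extremal/hyperbolic comparison, the automatic upper bound $\mod(A)\leq 1/\ext_\s(\a)$ from the extremal-length supremum, the pigeonhole to select the largest of $F,E_0,E_1$) is elementary and correct. In short: right approach, correct identification of the key inequality, but the hard step is left as a black box in exactly the place the paper also delegates it to Minsky.
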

Furthermore, the modulus of a primitive annulus is estimated as follows: 
\begin{theorem}[\cite{minskyharmonic} Theorem~4.5, \cite{rafi1}  Lemma~3.6] 
\label{thm:modcomparison}Let $A \subset S$ be a primitive
annulus.
Let $d$ be the $q$-distance between the boundary components $\dd_0,
\dd_1$. If $A$ is expanding let $\dd_0$ be the inner boundary. Then either
\begin{enumerate}
\item[(i)]  $A$ is flat and $\mod A = d/l_q(\dd_0)=d/l_q(\dd_1)$ or  
\item[(ii)] $A$ is expanding and $\mod A \asymp \log [d/l_q(\dd_0)]$.
\end{enumerate}
\end{theorem}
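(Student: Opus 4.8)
The plan is to treat the two alternatives separately, reducing each to an explicit Euclidean model. If $A$ is flat then, by definition, it is isometric to a cylinder obtained from a Euclidean rectangle $[0,c]\times[0,h]$ by gluing the two sides of length $h$; its two boundary components then both have $q$-length $c$, they lie at constant distance $h$ from one another so that $d=h$, and the cylinder is conformally the round annulus $\{1<|z|<e^{2\pi h/c}\}$, whose modulus is $h/c$. Hence $\mod A=d/l_q(\dd_0)=d/l_q(\dd_1)$ exactly, which is~(i).

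Now suppose $A$ is expanding, with inner boundary $\dd_0$, and set $s(x)=\operatorname{dist}_q(x,\dd_0)$. Since $A$ is regular, the level sets $\gamma_s=\{x\in A:s(x)=s\}$, $s\in[0,d]$, foliate $A$, with $\gamma_0=\dd_0$ and $\gamma_d=\dd_1$; write $l(s)=l_q(\gamma_s)$. I would estimate $\mod A$ by the length--area method for this foliation. Equip $A$ with the conformal metric $\rho=1/l(s(x))$; using the coarea formula and $|\nabla s|=1$ a.e.\ one gets $\operatorname{Area}_\rho(A)=\int_0^d ds/l(s)$, while every arc joining $\dd_0$ to $\dd_1$ has $\rho$-length at least $\int_0^d ds/l(s)$, which gives $\mod A\ge\int_0^d ds/l(s)$. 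The reverse inequality, up to a universal factor, comes from the reciprocal estimate for the extremal length of the core curve using the same $\rho$: each $\gamma_s$ has $\rho$-length exactly $1$, and the condition $\kappa(\dd_0)<0$ makes the $\gamma_s$ convex, so no essential curve in $A$ is much $\rho$-shorter; thus $\mod A\asymp\int_0^d ds/l(s)$. To finish one controls $l(s)$: it is convex, because the $\gamma_s$ are equidistant in a flat surface; it is increasing, because $\kappa(\dd_0)<0$; and its right derivative at $0$ is $|\kappa(\dd_0)|$. Since $A$ is primitive, Gauss--Bonnet on the flat annulus $A$ gives $|\kappa(\dd_1)|=|\kappa(\dd_0)|$, so the left derivative of $l$ at $d$ is also $|\kappa(\dd_0)|$, and convexity forces $l$ to be affine, $l(s)=l_q(\dd_0)+|\kappa(\dd_0)|\,s$. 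Then $\int_0^d ds/l(s)=|\kappa(\dd_0)|^{-1}\log\bigl(1+|\kappa(\dd_0)|\,d/l_q(\dd_0)\bigr)$, and since $|\kappa(\dd_0)|\asymp 1$ (bounded in terms of the topology of $S$, via the cone-angle data that $A$ can enclose), this is $\asymp\log\bigl(d/l_q(\dd_0)\bigr)$ in the regime $d\succ l_q(\dd_0)$ relevant for applications, while for $d\prec l_q(\dd_0)$ the modulus $\mod A$ is bounded; this gives~(ii).

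The step I expect to be the real obstacle is the geometric input just invoked: that an expanding annulus is, coarsely, a round Euclidean annulus --- concretely, that the distance foliation $\gamma_s$ is well behaved and its level curves are length-minimizing enough to pin $\mod A$ from above as well as below, together with the estimate $|\kappa(\dd_0)|\asymp 1$. This is precisely where the regularity and curvature hypotheses in the definition of an expanding annulus are used, and where one must accommodate singularities of $q$ lying on $\dd A$. The analysis is due to Minsky~\cite{minskyharmonic}, and the logarithmic form stated here is as in Rafi~\cite{rafi1}.
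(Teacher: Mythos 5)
This theorem is not proved in the paper; it is quoted verbatim from Minsky (\cite{minskyharmonic}, Theorem~4.5) and Rafi (\cite{rafi1}, Lemma~3.6), so there is no in-paper proof to compare against. Your reconstruction via the length--area method is, however, essentially the argument those sources use, and the main lines are sound: part~(i) is exact, the choice $\rho=1/l(s)$ gives $\operatorname{Area}_\rho=L_\rho=\int_0^d ds/l(s)$ and hence the lower bound on $\mod A$, and the identity $l'(s)=-\kappa(\dd_0)$ for all $s$ (Gauss--Bonnet on a flat, singularity-free collar) shows $l$ is affine and evaluates the integral to the stated $\log$.

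Two places deserve a bit more care than your sketch supplies. First, the upper bound on $\mod A$: ``the $\gamma_s$ are convex, so no essential curve is much $\rho$-shorter'' is the right intuition but not yet an estimate. The clean way to make it precise is to develop $A$ isometrically into a Euclidean cone of apex angle $|\kappa(\dd_0)|$, where $\rho=1/l(s)=1/(|\kappa(\dd_0)|\,r)$; then for any core curve $\gamma$, $\int_\gamma\rho\,|dz|\ge\int_\gamma\rho\,r\,|d\theta|=|\kappa(\dd_0)|^{-1}\int_\gamma|d\theta|\ge 1$, since $\gamma$ must sweep total angle $|\kappa(\dd_0)|$ around the apex. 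This gives $L_\rho(\text{core})\ge 1$, hence $\mod A\le\int_0^d ds/l(s)$, with no slack. Second, your assertion $|\kappa(\dd_0)|\asymp 1$ is what makes $\int ds/l(s)\asymp\log(d/l_q(\dd_0))$, and it is \emph{not} automatic from ``primitive, regular, $\kappa(\dd_0)<0$'' alone; it relies on the specific expanding annuli that arise, whose inner boundary is a component of $\dd F_t(\a)$ and hence a $q$-geodesic carrying at least one zero of $q$ on the $F_t(\a)$-side, forcing $|\kappa(\dd_0)|\ge\pi$, while the upper bound comes from the total cone excess being controlled by $\chi(S)$. Flagging this dependence (which you do gesture at) is important, since the coarse constants in the theorem are claimed to depend only on the topology of $S$.
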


\subsection*{Minsky's product regions theorem}
\label{sec:minskyproduct}
Our main tool  for estimating \Teich distance 
 is Minsky's product regions theorem, which reduces the estimation of the distance
between two surfaces on which a given set $\Gamma$ of curves is short to a calculation in the hyperbolic plane $\HH^2$. 
To give a precise statement, we introduce the following notation.
Choose a pants curves system on $S$ that contains $\Gamma$, and 
for a curve $\a$ in the pants system let $s_{\a}(\s)$ be the 
Fenchel-Nielsen twist coordinate of $\a$. 
(Here $s_{\a}(\s) = \tilde s_\a(\s) / l_{\s}(\a)$, where $\tilde s_\a(\s)$ is the  actual hyperbolic distance twisted round $\a$, see Minsky~\cite{minskyproduct} for details.)
Let  $ \T_{thin}({\Gamma},\ep_0)$ be the subset of $\ts$ on which 
all the curves in $\Gamma$ have length less than $\ep_0$ and let
$S_{\Gamma}$ be the analytically finite surface obtained from $S$ by pinching
all the curves in $\Gamma$. By forgetting the Fenchel-Nielsen length and twist
coordinates associated to the curves in $\Gamma$ but retaining all remaining
Fenchel-Nielsen coordinates, we obtain a projection
$\Pi_\Gamma: \ts \to \T(S_{\Gamma}) $. For each $\a \in \Gamma$, 
 let $\HH_{\a}$ denote a copy of the
upper-half plane and let $d_{\HH_\a}$ denote \emph{half} the 
usual hyperbolic metric on $\HH_\a$ (see Lemma 2.2 in~\cite{minskyproduct} for the factor). Define 
$\Pi_{\a}: \ts \to
\HH_{\a}$ by $\Pi_{\a}(\s) = s_{\a}(\s) + i/ l_{\s}(\a) \in \HH_{\a}$.
Then the product regions theorem states:
\begin{theorem} [Minsky \cite{minskyproduct}]
 \label{thm:minskyproduct} 
 Let $\s,\t \in \T_{thin}({\Gamma},\ep_0) $.  Then 
$$d_{\ts}(\s,\t) \add
 \max_{\a \in {\Gamma}} \{ d_{\T(S_{\Gamma})}(\Pi_\Gamma(\s),\Pi_\Gamma(\t)),
 d_{\HH_{\a}}(\Pi_{\a}(\s), \Pi_{\a}(\t)) \}.$$
\end{theorem}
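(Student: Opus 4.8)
\emph{Proof proposal.}
The plan is to prove the two coarse inequalities separately. The lower bound, $d_{\ts}(\s,\t)\gadd\max_{\a\in\Gamma}\{\,\cdots\}$, will come from the fact that a $K$--quasiconformal map distorts the extremal length of every simple closed curve by at most the factor $K$, applied to well chosen test curves. The upper bound, $d_{\ts}(\s,\t)\ladd\max_{\a\in\Gamma}\{\,\cdots\}$, will come from assembling an explicit quasiconformal homeomorphism $\s\to\t$ out of a map on the thick part together with affine maps on the Margulis collars of the curves of $\Gamma$. Throughout we use the identity $d_{\ts}=\frac12\log K$, $K$ being the least dilatation of a quasiconformal map in the prescribed isotopy class, write $\ext_\s(\gamma)$ for extremal length, and use Kerckhoff's formula $e^{2d_{\ts}(\s,\t)}=\sup_\gamma\ext_\t(\gamma)/\ext_\s(\gamma)$.

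For the lower bound, fix $\s,\t\in\T_{thin}(\Gamma,\ep_0)$. First consider simple closed curves $\gamma$ disjoint from $\Gamma$. Since $\s$ and its image $\Pi_\Gamma(\s)$ differ only inside the $\ep_0$--Margulis collars of the curves of $\Gamma$, which are disjoint from the support of $\gamma$, we have $\ext_\s(\gamma)\mul\ext_{\Pi_\Gamma(\s)}(\gamma)$; taking the supremum over such $\gamma$ (these realize $d_{\T(S_\Gamma)}$ by Kerckhoff's formula applied on $S_\Gamma$) yields $d_{\ts}(\s,\t)\gadd d_{\T(S_\Gamma)}(\Pi_\Gamma(\s),\Pi_\Gamma(\t))$. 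Next fix $\a\in\Gamma$. For $\a$ itself, Theorems~\ref{thm:basic} and~\ref{thm:modcomparison} give $\ext_\s(\a)\mul l_\s(\a)$, since the flat collar of $\a$ has modulus $\mul 1/l_\s(\a)$; for a curve $\b$ transverse to $\a$ (taken from a pants system), the arc of $\b$ inside that collar forces $\ext_\s(\b)$ to behave, up to bounded multiplicative error and contributions from outside the collar, like the extremal length of a crossing curve on a flat cylinder whose modulus and twist are the imaginary and real parts of $\Pi_\a(\s)$. Comparing the two ratios $\ext_\t(\a)/\ext_\s(\a)$ and $\ext_\t(\b)/\ext_\s(\b)$ reproduces the coarse formula for the hyperbolic distance in $\HH_\a$ between $\Pi_\a(\s)$ and $\Pi_\a(\t)$ --- the factor $\frac12$ built into $d_{\HH_\a}$ being precisely the $\frac12$ in front of $\log$ --- so $d_{\ts}(\s,\t)\gadd d_{\HH_\a}(\Pi_\a(\s),\Pi_\a(\t))$ for each $\a$, and the lower bound follows.

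For the upper bound set $M=\max_{\a\in\Gamma}\{d_{\T(S_\Gamma)}(\Pi_\Gamma(\s),\Pi_\Gamma(\t)),\,d_{\HH_\a}(\Pi_\a(\s),\Pi_\a(\t))\}$; we build $F\colon\s\to\t$ with $\log K(F)\le 2M+O(1)$. Write each of $\s,\t$ as the union of the $\ep_0$--Margulis collars $C_\a$, $\a\in\Gamma$, which are conformally long flat cylinders of modulus $\mul 1/l(\a)$, and the complementary thick piece $S^0$, which lies within bounded \Teich distance of the corresponding truncation of $\Pi_\Gamma(\,\cdot\,)$. On $S^0$ use a quasiconformal map with $\log K\le 2\,d_{\T(S_\Gamma)}(\Pi_\Gamma(\s),\Pi_\Gamma(\t))+O(1)$, normalized to a fixed model near each component of $\dd S^0$. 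On each $C_\a$ use the affine shear-and-scale map of flat cylinders prescribed by the change of modulus and of twist; the standard flat-cylinder computation (equivalently, for the once-punctured torus, whose \Teich metric is half the hyperbolic metric on $\HH$) shows its dilatation equals $e^{2\,d_{\HH_\a}(\Pi_\a(\s),\Pi_\a(\t))}$. Finally interpolate between these maps in a sub-collar of bounded modulus at each end of each $C_\a$; since both maps are standard there the interpolation costs only bounded dilatation, and as the regions of large dilatation are essentially disjoint we get $\log K(F)\le 2M+O(1)$, i.e. $d_{\ts}(\s,\t)\ladd M$.

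The main obstacle is the uniformity of the additive constant in the upper bound: the pieces must be glued so that the dilatation of $F$ is controlled by the \emph{maximum} of the dilatations of the constituent maps rather than by their sum. This is possible precisely because the curves of $\Gamma$ are $\ep_0$--short, so their collars are long and leave room for the interpolation to be carried out in a region of bounded conformal modulus and bounded geometry. A secondary, bookkeeping, subtlety is tracking normalizations --- the comparison $\ext_\s(\a)\mul l_\s(\a)$, the factor $\frac12$ inside $d_{\HH_\a}$, and the Fenchel--Nielsen shear coordinate $s_\a$ versus the actual hyperbolic twisting --- so that the flat-cylinder computation matches the hyperbolic metric on $\HH_\a$ up to the claimed additive error. (Minsky instead routes both bounds through a single product estimate for $\ext_\s(\gamma)$ in the thin part, combined with Kerckhoff's formula for the \Teich distance.)
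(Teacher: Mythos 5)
This theorem is quoted in the paper directly from Minsky \cite{minskyproduct} without proof, so there is no in-paper argument to compare against; your parenthetical at the end correctly identifies Minsky's actual route, which is to prove a single coarse formula for $\ext_\s(\gamma)$ on the thin part (a max over the $\HH_\a$-contributions and a $S_\Gamma$-contribution) and then derive the distance estimate on both sides at once from Kerckhoff's formula. Your proposal instead splits the two inequalities and handles the upper bound by an explicit gluing of quasiconformal maps on the thick piece and the $\a$-collars. That is a legitimate alternative and closer in spirit to the classical plumbing/gluing constructions; what the extremal-length route buys is uniformity -- one estimate controls all test curves simultaneously, so both inequalities and the correct additive constant fall out together, whereas the gluing approach has to fight for the additive constant in the interpolation region, as you note.

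The one place your lower bound is genuinely thin is the transverse test curve $\b$. You write that $\ext_\s(\b)$ behaves like the crossing extremal length on the $\a$-cylinder ``up to bounded multiplicative error and contributions from outside the collar,'' but for the lower bound you cannot simply set those contributions aside: if the thick-part contribution to $\ext_\s(\b)$ is large, the ratio $\ext_\t(\b)/\ext_\s(\b)$ can fail to detect a large twist about $\a$, so a fixed pants curve $\b$ is not enough. What is needed is precisely Minsky's extremal length estimate, which says $\ext_\s(\b)$ is coarsely the \emph{maximum} of the collar contributions and the $S_\Gamma$ contribution; with that in hand one either uses the $\a$-collar term directly as a lower bound for $\ext_\t(\b)$ while choosing $\b$ (possibly a Dehn twist of a pants curve about $\a$) so that $\ext_\s(\b)$ stays comparable to the other terms, or one argues as Minsky does and never isolates a single test curve at all. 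So the architecture of your sketch is sound, but the key inequality you would have to supply to make the lower bound rigorous is essentially Minsky's extremal-length estimate itself, and it should be stated and invoked rather than absorbed into an ``up to contributions from outside'' clause.
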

\noindent To simplify notation, we write $d_{\HH_{\a}}(\s,\tau)$ instead of
$d_{\HH_{\a}}(\Pi_{\a}(\s), \Pi_{\a}(\tau))$ and $ d_{\T(S_{\Gamma})}(\s,\t) $ 
instead of $d_{\T(S_{\Gamma})}(\Pi_\Gamma(\s),\Pi_\Gamma(\t))$.

\medskip

In practice, we usually apply Minsky's theorem with the aid of the following estimate 
from geometry in $\HH^2$.
The hyperbolic distance between two points $z_1,z_2$ in
$\HH^2$ is given by
 $$\cosh 2d_{\HH}(z_1,z_2) = 1 + \frac{|z_1-z_2|^2}{2 \, \mbox{Im}
  \,z_1 \mbox{Im}\, z_2} .$$
Let $\s_a, \s_b$ be two points in $\TS$ at which a curve $\a$ is short. Let
$\ell_a, \ell_b$ and $s_a, s_b$
denote  
the Fenchel-Nielsen twist coordinate of $\a$ at $\s_a, \s_b$ respectively.
It follows easily from the above formula that
\begin{equation}
\label{eqn:distance-in-H2}
 d_{\HH^2_\a}(\s_a, \s_b)\add \frac{1}{2} \log \max \Big\{ 
|s_a - s_b|^2 \ell_a \ell_b, \frac{\ell_a}{\ell_b},
 \frac{\ell_b}{\ell_a} \Big\}. 
\end{equation}

\subsection*{Twists}

Our estimates also require taking account of  the \emph{twist}  $tw_\s(\nu,\a)$ of a lamination $\nu$ round $\alpha$ with respect to a hyperbolic metric $\s$.  Following Minsky, we define
$$
tw_\s(\nu,\a) = \inf \frac{\tilde s}{l_\s(\a)},$$
where $\tilde s$ is the signed hyperbolic distance between the perpendicular projections of the endpoints of a lift of a geodesic in $|\nu|$ at infinity onto a lift of $\alpha$, and the infimum is over all lifts of leaves of 
$|\nu|$ which intersect $\alpha$, see~\cite{minskyproduct} or~\cite{crs}
for details. 
We write $Tw_\s(\nu,\a)$ for $|tw_\s(\nu,\a)|$.
Notice that the twist $tw_\s(\nu,\a)$ does not depend on the measure on $\nu$,
only on the underlying lamination
$| \nu|$.

The  twist is closely related to the Fenchel-Nielsen twist coordinate.
Specifically, we have:
\begin{lemma} [Minsky \cite{minskyproduct} Lemma 3.5] 
\label{lem:minskytwist}
For any lamination $\nu \in \ML$ and any two metrics $\s, \s' \in
\ts$,
$$ | (tw_\s(\nu,\a) - tw_{\s'}(\nu,\a)) -
(s_\a (\s)  -
 s_{\a} (\s'))| \leq 4.$$ 
\end{lemma}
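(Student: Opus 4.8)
The plan is to follow Minsky's argument in~\cite{minskyproduct}. The first observation is that both expressions in parentheses are \emph{differences} between the two metrics, so neither the arbitrary choice of zero for the Fenchel--Nielsen twist coordinate $s_\a$, nor the choice implicit in the definition of $tw_\s$, affects either side; we may therefore compare everything against a fixed auxiliary curve. Accordingly, fix once and for all a simple closed curve $\beta$ that crosses $\a$ the minimal number of times (once or twice, according to whether $\a$ is separating in the union of the one or two pairs of pants of the chosen pants system adjacent to $\a$) and is supported in those pairs of pants; $\beta$ plays the role of a ``seam'' transverse to $\a$. The proof then has two independent ingredients, one relating $s_\a$ to $tw_\s(\beta,\cdot)$ and one showing that the relative twisting of $\nu$ and $\beta$ about $\a$ is a topological quantity.

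\emph{Step 1.} One shows that $s_\a(\s) \add -\, tw_\s(\beta,\a)$ for all $\s \in \ts$, with a universal additive error. With the normalisation $s_\a = \tilde s_\a / l_\s(\a)$, the Fenchel--Nielsen twist flow about $\a$ by hyperbolic distance $\tilde s$ changes $s_\a$ by exactly $\tilde s / l_\s(\a)$, since $l_\s(\a)$ is invariant under the flow; viewed in the universal cover, the same deformation slides the perpendicular feet on a fixed lift of $\a$ of the endpoints of a lift of $\beta$ by exactly $\tilde s$, hence changes $tw_\s(\beta,\a)$ by exactly $\tilde s/l_\s(\a)$ as well. Thus $s_\a(\s) + tw_\s(\beta,\a)$ is constant along twist lines, and, since $\beta$ crosses $\a$ a bounded number of times and lies in the adjacent pairs of pants, its geodesic representative winds only a bounded amount about $\a$ when $s_\a(\s) = 0$, uniformly in the remaining Fenchel--Nielsen coordinates; this bounds the constant.

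\emph{Step 2, and conclusion.} One shows that $tw_\s(\nu,\a) - tw_\s(\beta,\a) \add tw_{\s'}(\nu,\a) - tw_{\s'}(\beta,\a)$, again with a universal additive error: both differences coarsely compute the relative winding of $\nu$ and $\beta$ read off in the annular cover of $S$ associated with $\a$ (equivalently, the distance between the projections of $\nu$ and $\beta$ to the arc complex of that annulus), a quantity that does not involve the metric; what must be checked is that the geometric twist $tw_\s$, defined via perpendicular feet, differs from this combinatorial count by $O(1)$, which is the standard fact that a geodesic crossing the collar of $\a$ cannot wind inefficiently about it. Combining, one writes
$$ tw_\s(\nu,\a) - tw_{\s'}(\nu,\a)
= \big[ tw_\s(\nu,\a) - tw_\s(\beta,\a) \big] - \big[ tw_{\s'}(\nu,\a) - tw_{\s'}(\beta,\a) \big] + \big[ tw_\s(\beta,\a) - tw_{\s'}(\beta,\a) \big] ; $$
the first two bracketed terms cancel up to a universal constant by Step 2, while the last equals $-\big(s_\a(\s) - s_\a(\s')\big)$ up to a universal constant by Step 1 (with sign conventions fixed so the signs agree). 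Tracking the constants through Steps 1 and 2 as in~\cite{minskyproduct} yields the explicit bound $4$.

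The main obstacle is Step 2: replacing the metric-dependent definition of $tw_\s$ by the metric-independent combinatorial winding with only a bounded loss, i.e.\ the ``efficiency of geodesics in a collar'' estimate. Pinning down the sign and the exact value of the constant in Step 1 — the factors coming from whether $\beta$ meets $\a$ once or twice, from the one-half built into the normalisation of $s_\a$, and from the residual winding of $\beta$ at $s_\a = 0$ over the thin part — also requires care, but is routine once the structure above is set up.
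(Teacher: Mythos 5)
The paper cites this statement directly from Minsky~\cite{minskyproduct}, where it is Lemma~3.5, and gives no proof of its own, so there is no in-house argument to compare against; the natural reference is Minsky's proof. Your reconstruction via an auxiliary seam curve $\beta$ is a plausible strategy, but it has two real problems.

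First, Step~1 contains an outright sign error. You correctly compute that the twist flow by hyperbolic distance $\tilde s$ changes both $s_\alpha$ and $tw_\sigma(\beta,\alpha)$ by $\tilde s/l_\sigma(\alpha)$ --- the same amount with the same sign --- but then conclude that $s_\alpha + tw_\sigma(\beta,\alpha)$ is constant along twist lines, when your own computation shows it is $s_\alpha - tw_\sigma(\beta,\alpha)$ that is constant. Step~1 should therefore read $s_\alpha(\sigma) \add tw_\sigma(\beta,\alpha)$, with a plus sign. With your stated minus sign, the final algebra yields $tw_\sigma(\nu,\alpha)-tw_{\sigma'}(\nu,\alpha) \add -\bigl(s_\alpha(\sigma)-s_\alpha(\sigma')\bigr)$, the negation of the lemma; this is an arithmetic error, not a convention, and cannot be absorbed by declaring the signs to ``agree.''

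Second, and more seriously, Step~2 is where the entire difficulty of the lemma lives and it is not established. You assert that $tw_\sigma(\nu,\alpha)-tw_\sigma(\beta,\alpha)$ agrees, up to a uniform additive error, with a purely combinatorial winding in the annular cover, and defer to ``the standard fact'' about collar efficiency. But this very paper, in the sentence immediately following Lemma~\ref{lem:minskytwist}, states the metric-independence of the relative twist as a \emph{consequence} of Minsky's Lemma~3.5, citing Minsky for it: indeed it follows from Lemma~\ref{lem:minskytwist} applied to $\nu$ and to $\beta$ and subtracting. If Step~2 is to be an input to a proof of Lemma~3.5 rather than a corollary of it, it must be proved independently by a direct geometric argument in the collar, with a uniform error as $l_\sigma(\alpha)\to 0$ (the dangerous regime, where a crossing geodesic fellow-travels $\alpha$ for a long time). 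As written, the proposal names this claim and moves on; since it is the actual content of the lemma, that is the gap. Relatedly, the explicit constant~$4$ is unlikely to fall out of the $\beta$-based route without a careful accounting of the one-versus-two intersection cases, the normalisation of $s_\alpha$, and the residual winding of $\beta$ at $s_\alpha=0$, all of which you explicitly defer.
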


Although   $tw_\s(\nu,\a)$ depends on the metric $\s$,  for  $\nu_1,\nu_2 \in \ML$ the difference $tw_\s(\nu_1,\a) -
tw_\s(\nu_2,\a)$   is independent of $\s$ up to a universal additive constant, see~\cite{minskyproduct} and~\cite{crs} Section 4. This motivates the following definition:
 \begin{definition} For $\a \in {\cal C}(S)$ and $\nu_1, \nu_2 \in \ML$, 
 the relative twist of $\nu_1$ and $\nu_2$ round $\a$ is
\label{defn:algint}
$$ d_\a(\nu_1,\nu_2) = \inf_\s |tw_\s(\nu_1,\a) -
tw_\s(\nu_2,\a)|,$$
where the infimum is taken over all 
hyperbolic metrics $\s \in \ts$.
\end{definition}
(The relative twist $
d_\a(\nu_1,\nu_2) $ agrees up to an additive constant with the definition of
subsurface distance between the projections of $|\nu_1|$ and $|\nu_2|$
to the annular cover of $S$ with core $\a$, as defined
in~\cite{masurminskyII} Section 2.4 and used throughout~\cite{rafi1,
  rafi2}.)

  \medskip
  Rafi~\cite{rafi2}, see also~\cite{crs} Section 5.4,  introduced a similar notion of the twist    $ tw_q(\nu,\a)$ with respect to a quadratic differential metric $q$ compatible with $\sigma$ and proved the following result which enters into  the proof of Theorem~\ref{thm:Klarge}:
  \begin{proposition}[\cite{rafi2} Theorem 4.3 and \cite{crs} Proposition 5.7] 
 \label{prop:twistcomparison} Suppose that
 $\s \in \ts$ is a hyperbolic metric and
 $q$ is a compatible quadratic differential metric.
 For any geodesic lamination $\xi$ 
 intersecting $\a$, we have
 $$
 |tw_{\s}(\xi,\a) - tw_q(\xi,\a)| \prec \frac{1}{l_{\s}(\a)}.$$
\end{proposition}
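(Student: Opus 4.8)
The plan is to pass to the annular cover of $S$ determined by $\a$, to express each of the two twists as the number of times a single fixed leaf of $\xi$ winds across a natural neighbourhood of the core curve, and then to compare these neighbourhoods.

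First I would reduce to one leaf. Let $\hat S_\a = \hyp/\langle g_\a\rangle$ be the cover of $S$ corresponding to $\a$, where $g_\a$ is a primitive hyperbolic element representing $\a$; this is an open annulus in which $\a$ lifts to the core geodesic $\hat\a$ of the lifted hyperbolic metric, and a leaf $\ell$ of $|\xi|$ meeting $\a$ lifts to a geodesic arc $\hat\ell$ running from one end of $\hat S_\a$ to the other and crossing $\hat\a$. Unwrapping the definition, $l_\s(\a)\, tw_\s(\ell,\a)$ equals, up to the choice of lift, the signed hyperbolic distance between the feet of the perpendiculars dropped onto the axis $\tilde\a$ of $g_\a$ from the two endpoints at infinity of a lift of $\hat\ell$. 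Because any two leaves of $|\xi|$ meeting $\a$ are disjoint, their lifts are disjoint arcs in $\hat S_\a$ and therefore differ in their winding about $\hat\a$ by at most a universal constant (the standard diameter bound for annular subsurface projections of disjoint curves, cf.~\cite{masurminskyII}); hence $tw_\s(\xi,\a)$ and $tw_q(\xi,\a)$ each agree, up to a universal additive error, with the twist $tw_\s(\ell,\a)$, $tw_q(\ell,\a)$ of one fixed leaf $\ell$, and it suffices to bound $|tw_\s(\ell,\a) - tw_q(\ell,\a)|$.

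Next I would express the two twists as winding numbers. For an arc crossing an annular region with core homotopic to $\hat\a$, write its \emph{winding} across that region for the change in the angular coordinate, normalised so that $\hat\a$ has period one, between the arc's entry and exit points. On the one hand, $tw_\s(\ell,\a) \add N_\s$, where $N_\s$ is the winding of $\hat\ell$ across the standard hyperbolic collar of $\hat\a$: outside this collar each end of $\hat\ell$ is nearly orthogonal to the circles equidistant from $\hat\a$, so it contributes only bounded additional winding before reaching its endpoint at infinity, and the perpendicular projection quantity defining $tw_\s$ then differs from $N_\s$ by a bounded amount. On the other hand, using the definition of $tw_q$ from~\cite{rafi2},~\cite{crs} together with the decomposition of a maximal $q$-annulus about $\hat\a$ into a flat middle cylinder $F$ flanked by two primitive expanding annuli $E^{\pm}$ (see the subsection on flat and expanding annuli and Theorems~\ref{thm:basic} and~\ref{thm:modcomparison}), one has $tw_q(\ell,\a) \add N_q$, where $N_q$ is the winding of the $q$-geodesic arc homotopic to $\hat\ell$ across $F$: each $E^{\pm}$ is coarsely isometric to an annulus bounded by two concentric circles in the plane, so a $q$-geodesic crossing it is coarsely radial and winds a bounded amount, while across the Euclidean cylinder $F$ the winding is unambiguous.

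Finally I would compare the two neighbourhoods. The flat cylinder $F$ has modulus $\prec 1/l_\s(\a)$, the hyperbolic collar of $\hat\a$ has modulus $\asymp 1/l_\s(\a)$, and, up to bounded distance, $F$ sits inside this collar. On the long, rotationally symmetric cylinder $F$ the winding of an arc is a coarse homotopy invariant rel ends, hence the same for $\hat\ell$ and its $q$-geodesic representative up to a bounded error; so $N_\s - N_q$ equals, up to a universal constant, the winding of $\hat\ell$ across the part of the hyperbolic collar lying outside $F$. A direct computation in the collar metric $d\rho^2 + l_\s(\a)^2\cosh^2\rho\, d\theta^2$ (with $\theta$ of period one) shows that a geodesic arc crossing the collar transversally winds $\prec 1/l_\s(\a)$ times, and therefore
$$\bigl| tw_\s(\ell,\a) - tw_q(\ell,\a) \bigr| \add |N_\s - N_q| \prec \frac{1}{l_\s(\a)},$$
which together with the reduction above proves the proposition. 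The main obstacle is this last step: one must show that the flat/expanding decomposition of the $q$-annulus about $\a$ is geometrically compatible, up to universally bounded error, with the hyperbolic collar --- so that the flat cylinder contributes the same amount to both twists and the entire discrepancy is confined to the expanding region --- and then extract the sharp bound $\prec 1/l_\s(\a)$ on the leftover winding from the modulus comparisons of Theorems~\ref{thm:basic} and~\ref{thm:modcomparison}, while checking that the ``inf over lifts'' in both definitions of the twist costs only a bounded error.
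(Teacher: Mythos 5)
The proposition you are asked to prove is not actually proved in this paper: it is quoted from Rafi's combinatorial model paper (Theorem 4.3) and from \cite{crs} (Proposition 5.7), and the present paper only gestures at the mechanism later on (``It follows from the Gauss--Bonnet theorem that in an expanding annulus, two geodesics intersect at most once. Hence the contribution to $tw_q(\nu,\alpha)$ is essentially contained in $F(\alpha)$''). Your sketch follows the same broad template as the cited proofs --- pass to the annular cover, reduce to a single leaf, interpret $tw_\sigma$ and $tw_q$ as windings across the hyperbolic collar and the flat cylinder respectively, and use the Gauss--Bonnet/primitive-annulus fact to say that the expanding annuli carry only bounded winding --- so you have located the right ingredients.

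That said, the final comparison step has a genuine gap that you half-acknowledge but do not close, and it is where the content of the proposition actually lies. Two specific points. First, the assertion that the winding across $F$ ``is a coarse homotopy invariant rel ends, hence the same for $\hat\ell$ and its $q$-geodesic representative up to a bounded error'' is not right as stated: $\hat\ell_\sigma$ and $\hat\ell_q$ are properly homotopic arcs in the annular cover, but they do not share entry and exit points on $\partial F$, so their windings across $F$ alone can differ by an amount equal to the winding of the homotopy in the complementary region, which is exactly what you are trying to bound. The quantity that is genuinely a homotopy invariant (up to $O(1)$) is the total end-to-end winding in the annular cover, and one must carefully split this between $F$, the expanding annuli, and the part of the collar outside the maximal $q$-annulus. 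Second, the claimed ``direct computation in the collar metric'' that a geodesic crossing the collar ``winds $\prec 1/l_\sigma(\alpha)$ times'' is false for a geodesic crossing the full collar --- a leaf of $\xi$ that is nearly tangent to $\alpha$ winds arbitrarily many times, which is precisely why $tw_\sigma$ itself is unbounded. What you need instead is that the $q$-perpendicular transversal of $F$, viewed in the $\sigma$-metric, twists relative to a $\sigma$-perpendicular transversal by at most $\asymp 1/l_\sigma(\alpha)$; this is the real estimate, it is not a statement about the leaf $\ell$ at all, and it is where the modulus bound $\mod(\text{collar}) \asymp 1/l_\sigma(\alpha)$ (rather than a crude collar-width bound, which gives a spurious $\log(1/l_\sigma(\alpha))$ factor) must be brought to bear. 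Your closing paragraph correctly names the compatibility of the flat/expanding decomposition with the collar as the obstacle, but the sketch as written does not supply the mechanism to overcome it, and the intermediate claims above would steer a careful write-up off course.
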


\medskip 

We shall also need the following important estimates of the twist which  complement Theorem~\ref{thm:GLshort}. If $\a$ is vertical or horizontal, $t_\a$ is defined 
using the convention discussed following Proposition~\ref{prop:aroundbalancetime}.
\begin{theorem}[\cite{crs} Theorems 5.11, 5.13]
\label{thm:gtwist}
Let $\a$ be a simple closed curve on $S$.
If $\a$ is extremely short on $\G_t$ then: 
\begin{eqnarray}
Tw_{\G_t}(\np,\a) &\prec&
\frac{1}{l_{\G_t}(\a)}   \ \mbox{ if }
t > t_\a, \nonumber \\
Tw_{\G_t}(\nm,\a) &\prec&
 \frac{1}{l_{\G_t}(\a)}   \ \mbox{ if }
t < t_\a. \nonumber
\end{eqnarray}
\end{theorem}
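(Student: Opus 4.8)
The plan is to derive the twist bounds from the structure of the maximal annulus around $\a$ together with the twist comparison results already assembled in the excerpt. The key observation is that when $\a$ is extremely short on $\G_t$, Theorem~\ref{thm:basic} gives a primitive $q_t$-annulus $A$ around $\a$ with $\mod A \asymp 1/l_{\G_t}(\a)$, and $A$ is either flat or expanding. I would first handle the case $t > t_\a$, where by definition $i(\a,\nu^-_t) < i(\a,\nu^+_t)$; the symmetric case $t < t_\a$ then follows by the obvious time-reversal symmetry $t \mapsto -t$, $\np \leftrightarrow \nm$ of the whole setup. For $t>t_\a$ the point is that the vertical foliation $\nu^-_t$ is the ``small'' one, so its twisting about $\a$ relative to the annulus $A$ is controlled, whereas the horizontal foliation $\nu^+_t$ may wind many times.

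\medskip

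The main steps I would carry out are as follows. First, using Proposition~\ref{prop:twistcomparison}, reduce the hyperbolic twist $tw_{\G_t}(\np,\a)$ to the quadratic-differential twist $tw_{q_t}(\np,\a)$ up to an error $\prec 1/l_{\G_t}(\a)$; since the target bound is itself $\prec 1/l_{\G_t}(\a)$, it suffices to bound $tw_{q_t}(\np,\a)$. Second, I would estimate $tw_{q_t}(\np,\a)$ geometrically inside the maximal annulus. In the flat case, $A$ is a Euclidean cylinder and both foliations cross it as families of straight arcs; the number of times $\np$ wraps around the core is comparable to $i(\a,\np_t) \cdot \mathrm{width}(A)/l_{q_t}(\a)$ type quantities, and one reads off that $tw_{q_t}(\np,\a)$ is comparable to the horizontal component of the slope, which in turn is bounded by $\mod A \asymp 1/l_{\G_t}(\a)$ because $\np_t$ is ``mostly horizontal.'' In the expanding case, $A$ is coarsely a round annulus and the geodesic representatives of leaves of $\np$ spiral a bounded amount relative to the core, giving $Tw_{q_t}(\np,\a) \prec 1$, which is even stronger. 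Collecting the two cases yields $Tw_{\G_t}(\np,\a) \prec 1/l_{\G_t}(\a)$.

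\medskip

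The main obstacle, I expect, is making the flat-annulus estimate precise: one must show that the \emph{horizontal} foliation $\np_t$, which is the one not constrained to be small at time $t$, nevertheless does not twist about $\a$ by much more than the modulus of the flat annulus. This requires comparing the direction of $\np_t$ inside the flat cylinder (nearly parallel to the core, since the flat annulus is ``long in the vertical direction'' exactly when $\a$ is short and $t > t_\a$) with the geodesic direction, and controlling the singular leaves on $\dd A$ where the Euclidean picture breaks down. The inequality $i(\a,\nu^-_t) \leq i(\a,\nu^+_t)$ enters precisely here: it forces the flat cylinder to be at least as long measured transverse to $\np_t$ as transverse to $\nm_t$, so that the horizontal slope through $A$ is at most $O(\mathrm{height}/\mathrm{circumference}) = O(\mod A)$. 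Everything outside $A$ contributes only a bounded additive error by Lemma~\ref{lem:minskytwist} and the fact that twist is a coarse quasi-additive quantity across subsurfaces.

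\medskip

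An alternative, possibly cleaner route avoids quadratic differentials entirely: work directly with the hyperbolic metric $\G_t$, take the embedded collar of $\a$ of width $\asymp \log(1/l_{\G_t}(\a))$, and note that a leaf of $\nu^+_t$ crossing this collar picks up twisting proportional to the collar width times its transverse slope. The transverse slope of $\nu^+_t$ relative to $\a$ is governed by $i(\a,\nu^+_t)/l_{\G_t}(\a)$ up to the behavior at balance time, and the hypothesis $t > t_\a$ makes the $\nm$-contribution subdominant. I would present whichever of these is shorter given the machinery already set up in Section~\ref{sec:background}; in either case the proof is a few lines citing Theorem~\ref{thm:basic}, Theorem~\ref{thm:modcomparison}, Proposition~\ref{prop:twistcomparison}, and the definition of balance time, with the flat-versus-expanding dichotomy doing the real work.
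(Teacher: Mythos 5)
Your overall strategy---reduce $tw_{\G_t}$ to $tw_{q_t}$ via Proposition~\ref{prop:twistcomparison}, split the $q_t$-twist into contributions from the flat and expanding annuli, bound the expanding contribution by a constant via Gauss--Bonnet, and bound the flat contribution by $\mathrm{mod}\,F_t(\a) \asymp D_t(\a) \prec 1/l_{\G_t}(\a)$---is the right one, and matches the techniques the paper attributes to~\cite{crs}. However, the geometric reasoning inside the flat annulus is stated backwards in a way that would not survive being written out.

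The condition $t > t_\a$ gives $i(\a,\nu^+_t) \geq i(\a,\nu^-_t)$, which forces the leaves of $\np$ to cross $\a$ at an angle of \emph{at least} $45^\circ$ inside the flat cylinder: $\np$ is nearly \emph{perpendicular} to the core, not ``nearly parallel to the core'' as you write. The whole point of the theorem is that $\np$ is the \emph{calm} foliation for $t > t_\a$; it is $\nm$ that becomes nearly parallel to $\a$ and whose twist may blow up. Your opening claim that ``$\nu^-_t$'s twisting is controlled whereas $\nu^+_t$ may wind many times'' is exactly the reverse of the truth, and is incompatible with the theorem you are trying to prove (which bounds $Tw(\np)$, not $Tw(\nm)$). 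Concretely, if $\np$ meets $\a$ at angle $\phi \geq 45^\circ$ inside the flat cylinder of circumference $L$ and height $H$, a leaf crossing the cylinder slides $H\cot\phi$ along $\a$, so $Tw_{F_t}(\np,\a) \asymp (H/L)\cot\phi = \mathrm{mod}\,F_t(\a)\cdot\cot\phi \leq \mathrm{mod}\,F_t(\a)$; the balance-time inequality is used to get $\cot\phi \leq 1$, \emph{not} to get ``slope $\leq O(\mathrm{mod}\,A)$'' as you assert. Your intermediate claims ``twist is comparable to the slope'' and ``slope is at most $O(\mathrm{mod}\,A)$'' are each off by a factor of $\mathrm{mod}\,A$, in opposite directions; they cancel to give the right final inequality, but neither step is correct as stated. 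A second, smaller imprecision: the flat and expanding annuli are not alternative cases---both are present, and one sums their twist contributions ($\leq D_t(\a)$ from the flat part plus $O(1)$ from the expanding part, giving $\prec 1/l_{\G_t}(\a)$ in all cases), rather than getting $Tw_{q_t}(\np,\a) \prec 1$ outright ``in the expanding case.'' Fixing the orientation error and the slope bookkeeping would turn this into a correct proof.
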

\begin{theorem}[\cite{crs} Theorems 6.2, 6.9]
 \label{thm:Ltwist} 
Let $\a$ be a simple closed curve on $S$.
If $\a$ is extremely short on $\L_t$ then,
\begin{eqnarray}
Tw_{\L_t}(\nu^+,\a) &\prec&   \frac{1}{ l_{\L_t}(\a)}  
\ \mbox{ if } t > t_\a , \nonumber \\
Tw_{\L_t}(\nu^-,\a) &\prec&
  \frac{1}{ l_{\L_t}(\a)} 
\ \mbox{ if } t< t_\a . \nonumber
\end{eqnarray} 
\end{theorem}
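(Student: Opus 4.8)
\section*{Proof proposal}

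The plan is to exploit that $\L_t$ is the \emph{critical point} of the length function $E_t=e^{t}l(\np)+e^{-t}l(\nm)$ on $\ts$; in particular it is critical for $E_t$ restricted to the one--parameter Fenchel--Nielsen twist flow about $\a$. By Kerckhoff's cosine formula for the derivative of hyperbolic length along an earthquake path, this critical--point condition reads
\[
  e^{t}\!\int_{\np}\!\cos\theta \;+\; e^{-t}\!\int_{\nm}\!\cos\theta \;=\;0\qquad\text{at }\s=\L_t,
\]
where in each integral $\theta$ is the angle at which a leaf crosses the geodesic representative of $\a$ and the integration is against the transverse measure restricted to $\a$. Thus the whole problem is to extract enough twisting information from this single identity. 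Note $|\int_{\np}\cos\theta|\le i(\a,\np)$ always, and I will abbreviate $\int_{\np}\cos\theta=i(\a,\np)\,\overline{\cos}{}_{\np}$, similarly for $\nm$, so that $e^{t}i(\a,\np)\,\overline{\cos}{}_{\np}+e^{-t}i(\a,\nm)\,\overline{\cos}{}_{\nm}=0$.

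First I would estimate $\cos\theta$ when $\a$ is extremely short on $\L_t$. Every leaf of $\np$ (or $\nm$) meeting $\a$ must spiral through the long Margulis collar around $\a$ before crossing; since distinct leaves of a lamination lift to disjoint arcs in the annular cover with core $\a$, all of them twist about $\a$ by the same amount up to a universal additive constant, namely $tw_{\L_t}(\np,\a)$ (resp.\ $tw_{\L_t}(\nm,\a)$). A direct computation in $\hyp$ then gives, at each crossing, $\cos\theta\add \mp\tanh\!\bigl(\tfrac12\,l_{\L_t}(\a)\,tw_{\L_t}(\np,\a)\bigr)$, so that $\overline{\cos}{}_{\np}\add\mp\tanh\!\bigl(\tfrac12\,l_{\L_t}(\a)\,tw_{\L_t}(\np,\a)\bigr)$ and similarly for $\nm$. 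Writing $\ell=l_{\L_t}(\a)$, $\tau^{\pm}=tw_{\L_t}(\np,\a),\,tw_{\L_t}(\nm,\a)$, and using $e^{t}i(\a,\np)/e^{-t}i(\a,\nm)=e^{2(t-t_\a)}$, two facts then drop out of the critical identity: since both coefficients are positive, $\overline{\cos}{}_{\np}$ and $\overline{\cos}{}_{\nm}$ have opposite signs, hence so do $\tau^{+}$ and $\tau^{-}$ (unless $\ell|\tau^{\pm}|$ is already bounded, in which case there is nothing to prove), whence
\[
  Tw_{\L_t}(\np,\a)=|\tau^{+}|\le|\tau^{+}-\tau^{-}|\prec d_\a(\np,\nm);
\]
and, since $|\overline{\cos}{}_{\nm}|\le1$, $\ |\overline{\cos}{}_{\np}|\le e^{-2(t-t_\a)}$.

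Now the argument splits according to the distance of $t$ from the balance time. If $t-t_\a$ is bounded below by a fixed positive constant, then $|\overline{\cos}{}_{\np}|\le e^{-2(t-t_\a)}$ is bounded away from $1$; but for $\a$ extremely short the estimate of $\cos\theta$ forces $|\overline{\cos}{}_{\np}|$ to within $O(e^{-\ell Tw_{\L_t}(\np,\a)})$ of $1$ once $\ell\,Tw_{\L_t}(\np,\a)$ is large, so $\ell\,Tw_{\L_t}(\np,\a)\prec1$, i.e.\ exactly $Tw_{\L_t}(\np,\a)\prec1/l_{\L_t}(\a)$. If instead $t>t_\a$ but $t-t_\a$ is bounded, I would fall back on the first fact $Tw_{\L_t}(\np,\a)\prec d_\a(\np,\nm)$ and show $d_\a(\np,\nm)\prec1/l_{\L_t}(\a)$ in this regime: if $\a$ is extremely short on $\L_t$ then, by the result of~\cite{crs} that the short curves on $\L_t$ and $\G_t$ coincide, $\a$ is short on $\G_t$, hence (Proposition~\ref{prop:aroundbalancetime}) on $\G_{t'}$ for every $t'$ between $t$ and $t_\a$. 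Applying Theorem~\ref{thm:gtwist} to $\np$ just above $t_\a$ and to $\nm$ just below, and letting $t'\to t_\a$, gives $Tw_{\G_{t_\a}}(\np,\a),Tw_{\G_{t_\a}}(\nm,\a)\prec1/l_{\G_{t_\a}}(\a)$, so $d_\a(\np,\nm)\prec1/l_{\G_{t_\a}}(\a)$; and since $1/l_{\G_t}(\a)\asymp\max\{D_t(\a),\log K_t(\a)\}\prec\max\{D_t(\a),\sqrt{K_t(\a)}\}\asymp1/l_{\L_t}(\a)$ near $t_\a$ (Theorem~\ref{thm:GLshort} and the bounded variation of $D_t,K_t$ over a bounded interval), we get $d_\a(\np,\nm)\prec1/l_{\L_t}(\a)$, as needed. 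The case of $\nm$ with $t<t_\a$ is symmetric under $t\mapsto-t$, $\np\leftrightarrow\nm$.

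The main obstacle I expect is the technical heart of the second step: establishing $\cos\theta\add\mp\tanh(\tfrac12\ell\,tw_{\L_t}(\cdot,\a))$ with additive errors that are genuinely uniform, and checking that averaging against the transverse measure does not destroy the sign information --- both rest on the collar lemma together with the disjointness of leaves in the annular cover, but require care, especially when $\np$ or $\nm$ is not a multicurve. A secondary subtlety is the assembly in the near--balance--time case, where one must patch together the ``short curves coincide'' statement, Proposition~\ref{prop:aroundbalancetime}, Theorem~\ref{thm:gtwist}, and the length comparison of Theorem~\ref{thm:GLshort}, keeping track of which universal constant governs ``short'' at each stage.
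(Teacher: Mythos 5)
Your core strategy---Kerckhoff's cosine derivative formula applied to the critical-point condition for $e^t l(\nu^+)+e^{-t}l(\nu^-)$, combined with the collar computation $\cos\theta\add\tanh(\tfrac12\,\ell\,tw)$ and the resulting sign/magnitude analysis of the balance equation---is the right one, and it does match the method used in \cite{crs} Section~6.  The observation that $\overline{\cos}_{\nu^+}$ and $\overline{\cos}_{\nu^-}$ must have opposite signs at $\L_t$, the inequality $|\overline{\cos}_{\nu^+}|\le e^{-2(t-t_\alpha)}$, and the case in which $t-t_\alpha$ is bounded below are all handled correctly.

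The step I would flag is the near--balance-time fallback.  When $t-t_\alpha$ is small the single Kerckhoff identity gives no information, and your argument correctly reduces to showing $d_\alpha(\nu^+,\nu^-)\prec 1/l_{\L_t}(\alpha)$.  But the way you justify this---invoking the result that short curves on $\L_t$ and $\G_t$ coincide, and the two-sided length estimate $1/l_{\L_t}(\alpha)\asymp\max\{D_t(\alpha),\sqrt{K_t(\alpha)}\}$ from Theorem~\ref{thm:GLshort}---quotes \cite{crs} Theorems~7.13--7.15, which in that paper are \emph{downstream} of the twist bounds 6.2 and 6.9 you are trying to prove.  (The comparison of short curves on $\L_t$ and $\G_t$ and the distance estimate 7.15 are established via Minsky's product regions theorem, which needs exactly these twist bounds.)  So as written the fallback is circular.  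What you actually need is only the one-sided inequality $D_t(\alpha)\prec 1/l_{\L_t}(\alpha)$ over a bounded interval around $t_\alpha$---equivalently, that $\alpha$ is at least as short on $\L_t$ as on $\G_t$.  This one-sided bound is provable independently, by a direct minimization estimate (if $\alpha$ were much longer on $\L_t$ than on $\G_t$, the weighted length sum there would exceed its value at $\G_t$, contradicting minimality), and that is how \cite{crs} makes the loop close: the coarse upper bound on $l_{\L_t}(\alpha)$ is obtained first, the twist bound second, and the full two-sided length estimate only afterward.  With that substitution your argument goes through; without it, the case $0<t-t_\alpha\prec 1$ is unproved.

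One smaller point worth being explicit about: the averaged quantity $\overline{\cos}_{\nu^+}$ is a measure-weighted average of $\tanh(\tfrac12\ell\,tw_{\text{leaf}})$ over the crossings, and different lifts of leaves have twists differing by at most a universal additive constant (Minsky's Lemma~3.5, or the fact that disjoint geodesics in the annular cover have projections of bounded diameter).  Because $\tanh$ has bounded derivative, that additive error costs only $O(\ell)\prec 1$ in $\overline{\cos}_{\nu^+}$, so the sign and magnitude conclusions survive the averaging---but this needs to be said, since the critical identity holds exactly for the averaged quantity and your twist estimate is stated only leaf-by-leaf.
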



\section{The  length estimates}
\label{sec:estimates}
In this section we discuss the quantities $D_t(\a)$ and $K_t(\a)$ that
appear in the length estimates
in Theorem~\ref{thm:GLshort}.

Let $q_t$ be the unit-area quadratic differential metric on $\G_t$ whose
vertical and horizontal foliations are $e^t \np$ and $e^{-t} \nm$, respectively.
In light of Theorems~\ref{thm:basic} and~\ref{thm:modcomparison}, to
estimate the length of a curve $\a$ which is extremely short in
$\G_t$, it is sufficient to estimate the modulus of a maximal flat or expanding
annulus around $\a$ in $q_t$. The union of  all $q_t$-geodesic representatives of $\a$  foliate a Euclidean cylinder  $F_t(\a)$, which is the maximal flat annulus
whose core is homotopic to $\a$. (The cylinder  is degenerate if the representative of $\a$ is unique.)
On either side of $F_t(\a)$ is attached a maximal expanding annulus. Let
$E_t(\a)$ be the one of larger modulus. 
Up to coarse equivalence, $D_t(\a)$ will be the modulus of
$F_t(\a)$ while   $\log K_t(\a)$  will be the modulus of 
 $E_t(\a)$.

  The precise definition of $D_t(\a)$ is as follows. If $\a$ is not a component of $|\nu^{\pm}|$, we define \begin{equation}
\label{eqn:modF}
D_t(\a) = e^{-2|t-t_{\a}|} d_{\a}(\np,\nm),
\end{equation}
 where
$\da$ is the relative twisting of $\np$ and $\nm$ about $\a$ as 
 defined above.  If  $\a$ is vertical, define $D_t(\a) = e^{-2t} \mod F_0(\a)$
 and  if $\a$ is horizontal, define $D_t(\a) = e^{2t} \mod F_0(\a)$,  where $F_0(\a)$ is the annulus at time $t = 0$. 
 
 The precise definition of $K_t(\a)$ is:
 \begin{equation}
\label{eqn:modE}
K_t(\a) = \frac{d_{q_t}}{l_{q_t}(\dd_0)}, 
\end{equation}
where  $\dd_0$ is the inner
boundary of $E_t(\a)$ and $d_{q_t}$ is the $q_t$--distance between the
inner and outer boundaries of $E_t(\a)$.

The connection with the definition  of  $K_t(\a)$ in~\cite{crs}, and the reasons why 
$D_t(\a)$ and $K_t(\a)$ are coarsely the moduli of $F_t(\a)$ and $E_t(\a)$ respectively, are explained at the end of this section.
The estimate for $1/l_{\G_t}(\a)$ in Theorem~\ref{thm:GLshort} follows
easily from the above definitions and Minsky's  estimates. 
The estimate for $1/l_{\L_t}(\a)$ in the same theorem required a lengthy  separate
analysis.
The only features of these definitions which will concern us here are the estimates in Theorem~\ref{thm:GLshort}, and the relative rates of change of $D_t(\a)$  and $K_t(\a)$ with  time.
 
\subsection*{The rate of change of $D_t(\a)$ and $K_t(\a)$}
The rate of change of $D_t(\a)$  with time is immediate from~(\ref{eqn:modF}).
To estimate the rate of change of $K_t(\a)$ note that, 
since $E_t(\a)$ is maximal, $d_{q_t}$ in Equation~(\ref{eqn:modE}) is half the $q_t$--length of an
essential arc from $\a$ to itself. Since the $q_t$--length of such an arc or a
simple closed curve can increase or decrease at the rate of at most $e^{\pm t}$, 
Equation~\eqref{eqn:modE} implies that $\sqrt{K_t(\a)}$  changes (in the coarse
sense) at a rate  at most $e^t$. More precisely, if $K_t(\a)$ is sufficiently large for all $t \in [a, b]$, 
then
\begin{equation}
\label{eqn:exponential}
e^{-2(b-a)} K_b(\a) \lmul K_a(\a) \lmul e^{2(b-a)} K_b(\a).
\end{equation}
 
In combination with 
Equation~\eqref{eqn:modF} and Theorem~\ref{thm:GLshort}, it follows that
the length of a short curve along $\L$ or $\G$ changes at rate
 at most $e^{2t}$. More detailed control is given by the following two lemmas, which should be understood with our convention on $t_{\a}$ to include the case when $\a$ is vertical or horizontal.   The first shows that  $K_t(\a)$ decays as $t$ moves away from $t_\a$ while the second,  illustrated schematically in Figure~\ref{fig:graphs}, compares rates of change of $D_t(\a)$ and $\sqrt{K_t(\a)}$. 

 \begin{lemma}
The function $K_t(\a)$ decays as $t$ moves away from $t_\a$.  More
 precisely,
\label{lem:Kdecay}
\begin{itemize}
\item[(i)] If $t_\a < v < w$, then $K_v(\a) \stackrel{{}_\ast}{\succ}
K_w(\a)$.
\item[(ii)] If $v < w < t_\a$, then $K_w(\a) \stackrel{{}_\ast}{\succ}
  K_v(\a)$.
\end{itemize}
\end{lemma}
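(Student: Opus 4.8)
The plan is to estimate the numerator and the denominator of $K_t(\a)=d_{q_t}/l_{q_t}(\dd_0)$ (see~\eqref{eqn:modE}) separately, using only the flat geometry of $q_t$ together with the balance time $t_\a$. For the denominator, observe that $\dd_0$, being a boundary component of the maximal flat cylinder $F_t(\a)$, is a $q_t$-geodesic representative of $\a$, so $l_{q_t}(\dd_0)=l_{q_t}(\a)$. Such a representative is a union of saddle connections, so its $q_t$-length is comparable to the sum of its intersection numbers with the two measured foliations $e^t\np$ and $e^{-t}\nm$ of $q_t$; that is, $l_{q_t}(\a)\asymp e^{t}\,i(\a,\np)+e^{-t}\,i(\a,\nm)$. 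By definition of $t_\a$ the two terms agree at $t=t_\a$, so $l_{q_t}(\a)\asymp e^{|t-t_\a|}\,l_{q_{t_\a}}(\a)$; in particular $l_{q_t}(\a)\asymp e^{t}\,i(\a,\np)$ for $t\ge t_\a$ and $l_{q_t}(\a)\asymp e^{-t}\,i(\a,\nm)$ for $t\le t_\a$. When $\a$ is vertical, $i(\a,\nm)=0$ and the first estimate holds for all $t$; when $\a$ is horizontal, $i(\a,\np)=0$ and the second holds for all $t$.

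For the numerator, recall that because $E_t(\a)$ is a maximal expanding annulus its outer boundary passes through a zero of $q_t$, so $d_{q_t}$ is comparable to the $q_t$-distance from a representative of $\a$ to such a zero; equivalently, as noted after~\eqref{eqn:modE}, $2d_{q_t}$ is the $q_t$-length of a shortest essential arc from $\a$ to itself. Passing from $q_t$ to $q_s$ scales horizontal lengths by $e^{s-t}$ and vertical lengths by $e^{t-s}$, so the $q_t$-length of any fixed arc or curve, and the $q_t$-distance between any fixed pair of sets, change by a factor at most $e^{|s-t|}$; since $d_{q_t}$ is, up to a uniform multiplicative constant, realized as a minimum of such quantities, a realizer at one time is a competitor at every other time, and therefore
$$\tfrac1C\,e^{-(b-a)}\,d_{q_a}\ \le\ d_{q_b}\ \le\ C\,e^{\,b-a}\,d_{q_a}\qquad(a\le b).$$

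Combining the two estimates completes the argument. If $t_\a\le v\le w$ then $l_{q_w}(\a)\asymp e^{\,w-v}\,l_{q_v}(\a)$, so by the upper bound above
$$K_w(\a)=\frac{d_{q_w}}{l_{q_w}(\a)}\ \asymp\ \frac{d_{q_w}}{e^{\,w-v}\,l_{q_v}(\a)}\ \le\ \frac{C\,e^{\,w-v}\,d_{q_v}}{e^{\,w-v}\,l_{q_v}(\a)}\ =\ C\,K_v(\a),$$
which is $K_v(\a)\gmul K_w(\a)$, proving~(i). If $v\le w\le t_\a$ then $l_{q_v}(\a)\asymp e^{\,w-v}\,l_{q_w}(\a)$, and the lower bound for $d_{q_w}$ gives $K_w(\a)\gmul K_v(\a)$ in exactly the same way, proving~(ii). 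The vertical and horizontal cases need no separate treatment: only one of~(i),~(ii) is then asserted, $l_{q_t}(\a)$ is a monotone function of $t$ on all of $\RR$, and the same computation applies verbatim.

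The step I expect to be the real work is the rate-of-change estimate for $d_{q_t}$ displayed above, i.e.\ the claim that the width of the \emph{maximal} expanding annulus $E_t(\a)$ changes by a factor at most $e^{|s-t|}$. The subtlety is that, as $t$ varies, both the side of $F_t(\a)$ carrying the larger expanding annulus and the zero of $q_t$ that obstructs it may change, so one cannot simply track a single arc or a single zero. One handles this by writing $d_{q_t}$ as a maximum, over the two sides, of a minimum, over zeros, of the $q_t$-distances from $\a$ to those (now fixed) zeros; each such distance has the asserted rate of change, and a maximum or minimum of continuous functions sharing a common logarithmic rate of change again has that rate. The flat-geometry facts this relies on are those of~\cite{minskyharmonic} and~\cite{rafi1} recalled above and in the discussion following~\eqref{eqn:modE}.
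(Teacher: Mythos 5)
Your proof is correct and follows essentially the same route as the paper's: estimate the rate of change of the numerator $d_{q_t}$ and the denominator $l_{q_t}(\alpha)$ of $K_t(\alpha)$ separately and combine. The one real difference is how the two ingredients are justified. For the denominator the paper simply cites the estimate $l_{q_t}(\alpha)\stackrel{{}_\ast}{\asymp} e^{|t-t_\alpha|}l_{q_{t_\alpha}}(\alpha)$ from Choi--Rafi (Lemma 2.1 of~\cite{cr}, or Theorem 2.1 of~\cite{rafi2}), whereas you rederive it directly from the intersection-number formula $l_{q_t}(\alpha)\asymp e^t\,i(\alpha,\nu^+)+e^{-t}\,i(\alpha,\nu^-)$ and the definition of the balance time; this is a correct and self-contained replacement for the citation. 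For the numerator the paper only states that $d_{q_t}$ is half the $q_t$-length of an essential arc from $\alpha$ to itself and that lengths of arcs move at rate at most $e^{\pm t}$, while you spell out the max-over-sides/min-over-arcs structure needed to make this rigorous when the realizing arc or the preferred side of $F_t(\alpha)$ changes with $t$; that is precisely the point the paper leaves implicit, and your treatment of it is correct (for a fixed arc lengths scale by at most $e^{|s-t|}$, and this rate survives both the min over arcs and the max over the two sides). So no gap: same argument, with more of the scaffolding written out.
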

\begin{proof} Suppose first that $\a$ is not a component of  $|\nu^{\pm}|$.
By Lemma 2.1 in~\cite{cr}, see also~\cite{rafi2} Theorem 2.1,
we have $l_{q_t}(\a) \mul e^{|t-t_\a|}l_{q_{t_\a}}(\a)$ for any $t
  \in \RR$. On the other hand, the length of any curve or arc can
 increase or decrease by a factor of at most $e^{\pm t}$. Hence, if $t_\a < v <
  w$, then
$$ K_v(\a) = \frac{d_{q_v}}{l_{q_v}(\a)} \mul
\frac{d_{q_v}}{e^{(v-w)}l_{q_w}(\a)}=
\frac{e^{(w-v)}d_{q_v}}{l_{q_w}(\a)} \geq
\frac{d_{q_w}}{l_{q_w}(\a)} = K_w(\a).
$$
A similar argument can be applied in the case when
$v < w < t_\a$.

If $\a$ is  vertical, then $ l_{q_t}(\a) \mul e^{t}l_{q_{0}}(\a)$, while if it is horizontal 
$  l_{q_t}(\a) \mul e^{-t}l_{q_{0}}(\a)$. The result then follows in the same way.
\end{proof}

 \begin{lemma}
\label{lem:DandK}
  Let $I_\a$ be as in Proposition~\ref{prop:aroundbalancetime} and
  let $[a,b] \subset I_\a$. Suppose that
  $D_u(\a) = \sqrt{K_u(\a)}$ for some $u \in [a,b]$.
\begin{itemize}
\item[(i)] If  $t_\a < u$, then $\sqrt{K_t(\a)} \gmul D_t(\a)$ for all
$t \in [u,b].$
\item[(ii)] If  $u< t_\a$, then $\sqrt{K_t(\a)} \gmul D_t(\a)$ for all
$t \in [a,u].$
\end{itemize}
\end{lemma}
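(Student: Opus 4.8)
The plan is to compare the rates at which $D_t(\a)$ and $\sqrt{K_t(\a)}$ decay as $t$ moves away from the balance time $t_\a$, and to exploit the fact that $D_t(\a)$ decays strictly the faster of the two: since the two are equal at $u$, once $t$ moves past $u$ in the direction away from $t_\a$, the term $\sqrt{K_t(\a)}$ must overtake $D_t(\a)$. I would prove (i); part (ii) is the mirror image, with ``$t$ increasing past $u$'' replaced by ``$t$ decreasing past $u$''. So fix $t_\a < u \le t \le b$.

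First I would pin down the decay of $D_t(\a)$. When $\a$ is not a component of $|\nu^{\pm}|$, definition~\eqref{eqn:modF} gives $D_s(\a) = e^{-2(s-t_\a)} d_{\a}(\np,\nm)$ for every $s > t_\a$, hence exactly $D_t(\a) = e^{-2(t-u)}D_u(\a)$; when $\a$ is vertical (so $t_\a = -\infty$) the same relation follows from $D_s(\a) = e^{-2s}\mod F_0(\a)$, and when $\a$ is horizontal the hypothesis $t_\a < u$ is vacuous. So $D_t(\a)$ decays at the precise rate $e^{-2}$ per unit time.

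Next I would bound the decay of $\sqrt{K_t(\a)}$ from below by $\sqrt{K_t(\a)} \gmul e^{-(t-u)}\sqrt{K_u(\a)}$, so that $\sqrt{K_t(\a)}$ decays at rate at most $e^{-1}$ per unit time, strictly slower than $D_t(\a)$. This is essentially the estimate recorded just before~\eqref{eqn:exponential}; since the target inequality $\sqrt{K_t(\a)} \gmul D_t(\a)$ tolerates no additive slack, I would re-derive it multiplicatively rather than quote~\eqref{eqn:exponential}. Writing $K_s(\a) = d_{q_s}/l_{q_s}(\dd_0)$ as in~\eqref{eqn:modE}: the denominator equals $l_{q_s}(\a)$, the $q_s$-length of the flat geodesic representative of $\a$, since $\dd_0$ bounds the maximal flat annulus $F_s(\a)$, and $l_{q_s}(\a) \mul e^{|s-t_\a|}l_{q_{t_\a}}(\a)$ by Lemma~2.1 of~\cite{cr} (see also~\cite{rafi2} Theorem~2.1), so $l_{q_t}(\a) \lmul e^{t-u}l_{q_u}(\a)$ for $t_\a < u \le t$; the numerator $d_{q_s}$ is half the $q_s$-length of an essential arc from $\a$ to itself, and the $q$-length of an arc changes by a factor of at most $e^{\pm|s-s'|}$, so $d_{q_t} \gmul e^{-(t-u)}d_{q_u}$. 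Dividing yields $K_t(\a) \gmul e^{-2(t-u)}K_u(\a)$, which is the claim after taking square roots; the vertical case is identical using $l_{q_s}(\a) \mul e^{s}l_{q_0}(\a)$, exactly as in the proof of Lemma~\ref{lem:Kdecay}.

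Combining the two estimates with the hypothesis $D_u(\a) = \sqrt{K_u(\a)}$ and the fact that $t \ge u$, I would conclude
$$\sqrt{K_t(\a)} \gmul e^{-(t-u)}\sqrt{K_u(\a)} = e^{-(t-u)}D_u(\a) = e^{(t-u)}D_t(\a) \ge D_t(\a),$$
which is (i). I do not expect a serious obstacle here: the rates of change involved are already essentially in hand in Section~\ref{sec:estimates} and in the proof of Lemma~\ref{lem:Kdecay}. The two points that need attention are (a) keeping both rate estimates purely multiplicative, with no additive error, so that the conclusion can be phrased as $\gmul$ rather than $\prec$; and (b) the bookkeeping for the vertical and horizontal cases under the convention $t_\a = \mp\infty$.
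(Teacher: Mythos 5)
Your proof is correct and follows essentially the same route as the paper: both establish $K_t(\a) \gmul e^{-2(t-u)}K_u(\a)$ by bounding the numerator $d_{q_t}$ below at rate $e^{-1}$ and the denominator $l_{q_t}(\a)$ above at rate $e$ (using $l_{q_s}(\a)\mul e^{|s-t_\a|}l_{q_{t_\a}}(\a)$), then combine with the exact decay $D_t(\a)=e^{-2(t-u)}D_u(\a)$ and the hypothesis $D_u=\sqrt{K_u}$ to conclude $\sqrt{K_t(\a)}\gmul e^{(t-u)}D_t(\a)\ge D_t(\a)$. You spell out a couple of steps the paper leaves implicit (the identification $l_{q_s}(\dd_0)=l_{q_s}(\a)$ and the bookkeeping for vertical/horizontal curves), but the argument is the same.
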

\begin{proof}
We refer to Figure~\ref{fig:graphs} for a schematic picture of the
two graphs.   The proof is based on the fact that $\sqrt{K_t(\a)}$
  decays at a slower rate than $D_t(\a)$ as $t$ moves away from
  $t_\a$. If $t_\a < u$, then for any $t > u$
  we have
$$
K_t(\a)= \frac{d_{q_t}}{l_{q_t}(\a)}
\gmul \frac{e^{-(t-u)} d_{q_u}}{e^{(t-u)} l_{q_u}(\a)}
= e^{-2(t-u)} K_u(\a).$$
Therefore, 
$$\sqrt{K_t(\a)} \gmul e^{-(t-u)} D_u(\a) = e^{(t-u)} D_t(\a) \geq D_t(\a).$$

\begin{figure}[tb]
\begin{center}
\centerline{\includegraphics{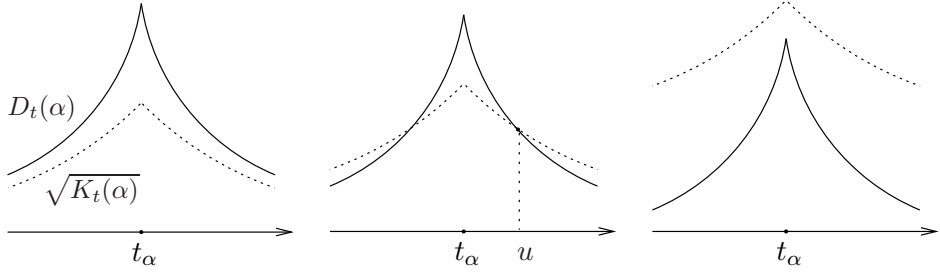}}
\caption{Schematic graphs of $D_t(\a)$ and $\sqrt{K_t(\a)}$.
The function $D_t(\a)$ changes at  rate $e^{2t}$ while 
$\sqrt{K_t(\a)}$ changes at rate at most $e^{t}$. }
 \label{fig:graphs}
\end{center}
\end{figure}

A similar argument can be applied in the case when
$u < t_\a$.
\end{proof}

\subsection*{Alternative definitions of $D_t(\a)$ and $K_t(\a)$.}
\label{sec:option} 

The remarks which follow, which may be helpful in clarifying background from~\cite{crs},  are not essential for the proof of Theorem~\ref{thm:qgeo0}.
  
 The claim that $D_t(\a)$  is coarsely equal to the modulus of
$F_t(\a)$ is justified by~\cite{crs} Proposition 5.8  (section 5.6 for the exceptional case) which states  that
 $\text{Mod}F_t(\a) \asymp D_t(\a)$.
The proof is an exercise in Euclidean geometry, combined with Rafi's  comparison Proposition~\ref{prop:twistcomparison}  between the twist in the quadratic and hyperbolic metrics. For example, at the balance time
 $  t_{\a}$, the horizontal and vertical leaves both make an angle $\pi/4$ with the
  $q_{t_\a}$-geodesic representatives of $\alpha$. In this case, a
   leaf of $\nu_{t_\a}^+$ or $\nu_{t_\a}^-$ intersects $\eta$
   approximately (up to an error of $1$) $l_{q_{t_\a}}(\eta) /
   l_{q_{t_\a}}(\a)$ times, so the modulus of $F_{t_\a}(\a)$ is
   approximated by $Tw_{F_{t_\a}}(\np,\a)= Tw_{F_{t_\a}}(\nm,\a)$, where $tw_{F_{t_\a}}$ (and $Tw_{F_{t_\a}}$) means the twist in the $q$-metric restricted to $F_{t_\a}$.   The result would follow on noting that $tw_{F_{t_\a}}(\np,\a)$ and $ tw_{F_{t_\a}}(\nm,\a)$ have opposite signs, except that $d_{\a}$ involves  hyperbolic twists on $S$ rather than  $q$-twists in $F_{t_\a}$. This is resolved using Proposition~\ref{prop:twistcomparison}, see~\cite{crs}  for further details. 
 
That $\log K_t(\a)$ is coarsely the modulus of $E_t(\a)$ follows from
Theorem~\ref{thm:modcomparison}.
The above is not the definition of $K_t(\a)$ given  in~\cite{crs}, but it is coarsely equivalent.
Specifically,
let $Y_1,Y_2$ be the 
(possibly coincident) thick components adjacent to $\a$ in the thick-thin
decomposition of the hyperbolic metric
$\G_t$.  Set
$$J_t(\a) = \frac{1}{l_{q_t}(\a)} \max  \{
 \lambda_{Y_1}, \lambda_{Y_2}
 \}
 $$
where $\lambda_{Y_i}$ is the length of the shortest non-trivial
non-peripheral simple closed curve on $Y_i$ with respect to the
metric $q_t$. (If either  $Y_i$ is a pair of pants there is a slightly different definition, see~\cite{crs}.)
In~\cite{crs}, we took the above expression for $J_t(\a)$ as the definition of $K_t(\a)$. 
 Proposition 5.9 in~\cite{crs} shows  that if $J_t(\a)$ is sufficiently large, then $J_t(\a)\mul K_t(\a)$
 with  $K_t(\a)$ defined as in (\ref{eqn:modE}) above.

\section{Expanding annuli that persist}
\label{sec:expanding}

It follows from Theorems~\ref{thm:GLshort} and~\ref{thm:GLdistance}
that if
$D_t(\a) \geq \sqrt{K_t(\a)}$  for every $\a$ that is short in $\G_t$, 
then the distance $d_{\ts}(\G_t,\L_t)$ 
is uniformly bounded. And, if 
$\G_t$ is in the thick part of 
\Teich space,  then $\L_t$ is too, so that on such intervals
$\L_t$ is quasi-geodesic. Thus our attention is focused on 
 time intervals
along which $K_t(\a)$ is large. This is handled with the  following more precise version of Theorem~\ref{thm:Klarge0}: 
\begin{theorem}
\label{thm:Klarge}
Choose $M>0$ to be a constant such that if $K_t(\a) > M$ then $\a$ is
extremely short in $\G_t$. 
(This is possible due to Theorem~\ref{thm:GLshort}.) Suppose that $K_t(\a) > M$ for all $t \in [a,b]$. 
Then
$$
d_{\T (S_\a)} (\G_a, \G_b) \add b-a .$$
\end{theorem}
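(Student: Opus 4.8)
The plan is to obtain the upper bound from Minsky's product regions theorem and to establish the lower bound by comparing the projection of $\G_t$ to $\T(S_\a)$ with a genuine Teichm\"uller geodesic there. Since $K_t(\a)>M$ forces $\a$ to be extremely short on $\G_t$, both $\G_a$ and $\G_b$ lie in $\T_{thin}(\{\a\},\ep_0)$, so Theorem~\ref{thm:minskyproduct} applies with $\Gamma=\{\a\}$; using that $\G$ is parametrised so that $d_{\ts}(\G_a,\G_b)=b-a$, it gives
$$b-a \add \max\{\,d_{\T(S_\a)}(\G_a,\G_b),\ d_{\HH_\a}(\G_a,\G_b)\,\},$$
whence $d_{\T(S_\a)}(\G_a,\G_b)\ladd b-a$ for free. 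Equivalently, what remains is to show that the term coming from $\a$ itself may be dropped, i.e.\ that $d_{\T(S_\a)}(\G_a,\G_b)\gadd b-a$.

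For the lower bound I would work directly in $\T(S_\a)$. Let $\mu^+$ and $\mu^-$ be the measured laminations on $S_\a$ obtained by restricting $\np$ and $\nm$ (via the standard surgery along the boundary; if $\a$ is vertical or horizontal one argues similarly, using the conventions of Section~\ref{sec:background}). They fill $S_\a$: a subsurface of $S_\a$ disjoint from both would be disjoint from $\a$ as well, hence essential in $S$, contradicting that $\np,\nm$ fill $S$. So $\mu^\pm$ determine a Teichm\"uller geodesic $\hat\G_t\subset\T(S_\a)$ with vertical and horizontal foliations $e^t\mu^+$ and $e^{-t}\mu^-$; it is coarsely unit speed, so $d_{\T(S_\a)}(\hat\G_a,\hat\G_b)\add b-a$. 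The key point is then
$$\text{for all } t\in[a,b],\qquad d_{\T(S_\a)}\!\big(\,\text{restriction of }\G_t\text{ to }S_\a,\ \hat\G_t\,\big)\prec 1,$$
from which $d_{\T(S_\a)}(\G_a,\G_b)\add d_{\T(S_\a)}(\hat\G_a,\hat\G_b)\add b-a$, finishing the argument. To prove this I would build $\hat\G_t$ explicitly from $q_t$: let $A_t$ be the union of the flat annulus $F_t(\a)$ and the two expanding annuli attached to it (one being $E_t(\a)$), excise $A_t$, and complete the surgery to a quadratic differential metric $\hat q_t$ on $S_\a$. Its vertical and horizontal foliations are $e^t\mu^+$ and $e^{-t}\mu^-$ up to bounded scaling, so its conformal class is $\hat\G_t$ up to bounded distance; and the hypothesis enters here, since by Theorem~\ref{thm:modcomparison} $\mod E_t(\a)\asymp\log K_t(\a)$ is bounded below by a large constant, which forces the two sides of $A_t$ to have comparable $q_t$-length and controls the way the leaves of $\mu^\pm$ run through $A_t$, so that the natural identification of $S\setminus A_t$ with the complement of the two cusps of $(S_\a,\hat q_t)$ is uniformly quasiconformal. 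Hence the restriction of $\G_t$ to $S_\a$ is within bounded distance of the conformal class of $\hat q_t$, which is $\hat\G_t$.

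The hard part is exactly this last step: the \emph{uniform} control over $t\in[a,b]$ of the conformal distortion caused by deleting the thin collar around $\a$ and re-closing $S_\a$ --- equivalently, that the extremal lengths of $\mu^+$ and $\mu^-$ on the restriction of $\G_t$ are comparable to those of $\np$ and $\nm$ on $\G_t$ with constants independent of $t$. This is precisely what fails when $\a$ is short only because of a large flat annulus (that is, when $D_t(\a)$ dominates and $K_t(\a)$ stays bounded): then a $t$-varying proportion of the transverse measure of $\np,\nm$ is absorbed into $F_t(\a)$, $\mu^\pm$ no longer track $\np,\nm$, and the comparison collapses. It is exactly the blanket hypothesis $K_t(\a)>M$ on all of $[a,b]$ that excludes this. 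Two smaller points also require attention: confirming that $\mu^+,\mu^-$ really do fill $S_\a$, and the exceptional cases in which $\a$ is a leaf of $\np$ or of $\nm$, both handled by the conventions already in force in Theorem~\ref{thm:GLshort} and the twist estimates of Section~\ref{sec:background}.
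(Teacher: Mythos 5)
Your upper bound is fine and, as you say, comes for free from Minsky's product regions theorem applied with $\Gamma=\{\a\}$. The lower bound, however, takes a genuinely different route from the paper and contains a real gap.

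The step you flag as ``the hard part'' is not merely hard, it is where the argument as sketched breaks down. You propose to excise $A_t = F_t(\a)\cup(\text{both expanding annuli})$ from $(S,q_t)$ and then ``complete the surgery'' to a quadratic differential $\hat q_t$ on $S_\a$. No such completion is specified: after removing $A_t$ you are left with a compact surface with two piecewise-smooth boundary curves (the outer boundaries of the expanding annuli), and there is no canonical way to close these up to a finite-area quadratic differential with cusps. Worse, the hypothesis $K_t(\a)>M$, which you invoke to control the surgery, actually cuts against you here: $\mod E_t(\a)\asymp\log K_t(\a)$ being large means the expanding annulus occupies a conformally large piece of the surface, so \emph{removing} it is a large conformal perturbation, not a uniformly quasiconformal one. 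The assertion that the natural identification of $S\setminus A_t$ with the complement of the two cusps of $(S_\a,\hat q_t)$ is uniformly quasiconformal is exactly what one would have to prove, and your sketch (``bounded scaling,'' ``comparable $q_t$-length,'' ``controls the way the leaves run'') does not do it. (The side remarks --- that $\mu^\pm$ fill $S_\a$, and the exceptional vertical/horizontal cases --- are fine.)

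The paper avoids the problem by performing a different, and much smaller, surgery: it excises only the flat annulus $F_t(\a)$ and reglues its two boundary circles via the perpendicular foliation, producing a quadratic differential $\overline q_t$ on a surface $\overline\G_t$ of the \emph{same} topological type as $S$, with $\overline\a$ still an honest (short) curve. The expanding annuli are kept. Because the $\overline q_t$ fit together into a unit-speed Teichm\"uller geodesic in $\T(S)$, Minsky's theorem gives $b-a \add \max\{ d_{\T(S_\a)}(\overline\G_a,\overline\G_b),\, d_{\HH^2_\a}(\overline\G_a,\overline\G_b)\}$. The $\HH^2_\a$ term is then shown to be $\prec \log(b-a)$: there is no flat annulus in $\overline q_t$, so by Proposition~\ref{prop:twistcomparison} and Lemma~\ref{lem:minskytwist} the twisting contribution is bounded, while $K_{\overline q_t}(\a)=K_t(\a)$ and the slow (logarithmic) variation $1/\ell_t\asymp\log K_t(\a)$ controls the length ratio. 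Finally, the $\T(S_\a)$-projections of $\G_t$ and $\overline\G_t$ are shown to be within bounded distance using Minsky's extremal-length lemma (Lemma 8.4 of \cite{minskyharmonic}), which handles both the case where $F_t(\a)$ has bounded modulus (bounded quasiconformality of the reglueing) and the case where it has large modulus (introduce an intermediate surface $\cal Z_t$ where both boundary curves carry high-modulus collars). It is precisely by keeping the expanding annuli, rather than deleting them, that the paper gets a unit-speed geodesic in a fixed Teichm\"uller space and a controllably small $\HH^2_\a$-contribution --- the two things your construction does not supply.
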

\begin{corollary} Let $\Gamma$ be a family of disjoint curves on $S$ such that
$K_t(\a) > M$  for all $t \in [a,b]$ and for every $\a \in \Gamma$.
Then
$$
d_{\T (S_{\Gamma})} (\G_a, \G_b)  \add b-a.$$ 
\end{corollary}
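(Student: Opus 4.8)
The plan is to induct on the number of components $|\Gamma|$, the case $|\Gamma|=1$ being exactly Theorem~\ref{thm:Klarge}. Since $K_t(\a)>M$ for every $\a\in\Gamma$ and every $t\in[a,b]$, each such $\a$ is extremely short on $\G_t$, so $\G_a$ and $\G_b$ both lie in the thin region $\T_{thin}(\Gamma,\ep_0)$; on this region Minsky's product regions theorem (Theorem~\ref{thm:minskyproduct}) makes $\Pi_\Gamma$ coarsely distance non-increasing, which already gives the upper estimate $d_{\T(S_\Gamma)}(\G_a,\G_b)\ladd d_{\ts}(\G_a,\G_b)=b-a$, the equality holding because $\G$ is parametrised by arclength for the Teichm\"uller metric. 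The content is therefore the reverse inequality $d_{\T(S_\Gamma)}(\G_a,\G_b)\gadd b-a$.

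For the inductive step write $\Gamma=\Gamma'\sqcup\{\a\}$ and observe that pinching $\Gamma$ is the same as first pinching $\a$ and then pinching $\Gamma'$, so $S_\Gamma=(S_\a)_{\Gamma'}$ and $\Pi_\Gamma=\Pi_{\Gamma'}\circ\Pi_\a$. Theorem~\ref{thm:Klarge} applied to $\a$ gives $d_{\T(S_\a)}(\Pi_\a\G_a,\Pi_\a\G_b)\add b-a$. The images of $\np$ and $\nm$ under collapsing $\a$ fill $S_\a$ and hence determine a Teichm\"uller geodesic $\G^\a$ in $\T(S_\a)$; I would use the analysis of Section~\ref{sec:expanding} underlying Theorem~\ref{thm:Klarge} to show that $t\mapsto\Pi_\a(\G_t)$ coarsely fellow-travels $\G^\a$ under a reparametrisation $\phi$ with $\phi(b)-\phi(a)\add b-a$, and that for each $\b\in\Gamma'$ the quantities $D_s(\b),K_s(\b)$ measured along $\G^\a$ stay comparable to $D_t(\b),K_t(\b)$ — in particular the $K$-coefficients remain above $M$ (after enlarging $M$) on $[\phi(a),\phi(b)]$ — because $\b$ is disjoint from $\a$ and the maximal annuli around the two curves can be chosen disjoint. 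The inductive hypothesis applied to $\G^\a$ in $S_\a$ with the family $\Gamma'$ then yields $d_{\T((S_\a)_{\Gamma'})}(\G^\a_{\phi(a)},\G^\a_{\phi(b)})\add\phi(b)-\phi(a)\add b-a$; since $\Pi_\a\G_a,\Pi_\a\G_b$ are within bounded distance of $\G^\a_{\phi(a)},\G^\a_{\phi(b)}$ and $\Pi_{\Gamma'}$ is coarsely $1$-Lipschitz on the thin part, this survives projection to give $d_{\T(S_\Gamma)}(\G_a,\G_b)\gadd b-a$, closing the induction. (An alternative that avoids producing $\G^\a$ explicitly is to run Theorem~\ref{thm:minskyproduct} inside $\T(S_\a)$ for the short family $\Gamma'$ and to estimate the resulting $\HH_\b$-terms directly via~\eqref{eqn:distance-in-H2}, Theorem~\ref{thm:GLshort}, the twist bounds of Theorems~\ref{thm:gtwist}--\ref{thm:Ltwist} and Theorem~\ref{thm:Klarge} applied to $\b$, but this seems to require the same inputs.)

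The step I expect to be the main obstacle is this persistence claim: that collapsing the very short curve $\a$ leaves the flat and expanding annuli — and hence the moduli defining $D_t(\b)$ and $K_t(\b)$ — of the disjoint curves $\b\in\Gamma'$ essentially unchanged, and that $b-a$ is the correct time span for the induced geodesic $\G^\a$. This is exactly the localisation of the $q_t$-geometry that is implicit in Theorem~\ref{thm:Klarge}; making it explicit amounts to choosing the maximal annuli around $\a$ and around the curves of $\Gamma'$ mutually disjoint and tracking the moduli of the latter as the modulus of the former is sent to infinity.
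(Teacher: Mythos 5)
The paper's own proof of the corollary is a one-line remark that it is ``immediate'' from the proof of Theorem~\ref{thm:Klarge}, and the reason is that the cut-and-reglue construction applies verbatim to several disjoint curves at once: the maximal flat cylinders $F_t(\a)$, $\a\in\Gamma$, have pairwise disjoint interiors, so one excises all of them simultaneously and reglues to obtain a single new \Teich geodesic $\overline\G_t$ on $S$.  The map $\varphi_t$ fixes the complement of $\bigcup_\a F_t(\a)$ up to scaling, so $K_{\overline q_t}(\a)=K_t(\a)$ for every $\a\in\Gamma$, and the two estimates~\eqref{eqn:hyp} and~\eqref{eqn:pinched} in the proof of Theorem~\ref{thm:Klarge} are purely local to each $\a$ and hence hold for each $\a\in\Gamma$ individually.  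Feeding these into Minsky's formula for $d_{\ts}(\overline\G_a,\overline\G_b)=b-a$ with the family $\Gamma$, and using the triangle inequality at the endpoints, gives the corollary in one pass.  No induction, and no new geodesic on a smaller surface, is needed.

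Your induction route is genuinely different and, as written, has a real gap precisely at the step you flag.  The obstacle is not just technical: Theorem~\ref{thm:Klarge} as stated compares only the two \emph{endpoints} $\Pi_\a(\G_a)$ and $\Pi_\a(\G_b)$ in $\T(S_\a)$; it produces no geodesic $\G^\a$ in $\T(S_\a)$, no reparametrisation $\phi$, and no control of the path $t\mapsto\Pi_\a(\G_t)$ between the endpoints.  (Even the auxiliary geodesic $\overline\G_t$ in the proof lives in $\ts$, not $\T(S_\a)$, and $\a$ is still short on it; the proof only compares $\overline\G$ to $\G$ at $t=a,b$.)  So to set up the inductive hypothesis on $S_\a$ you would have to construct $\G^\a$, prove it fellow-travels $\Pi_\a(\G_t)$ uniformly, identify the correct time change, and verify that $K_t(\b)$ for $\b\in\Gamma'$ stays above (an adjusted) $M$ along $\G^\a$ --- that is, you would have to reopen the proof of Theorem~\ref{thm:Klarge} rather than invoke it as a black box, at which point the simultaneous cut-and-reglue is strictly simpler.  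The same objection applies to your parenthetical alternative: applying Minsky's theorem inside $\T(S_\a)$ and estimating the $\HH_\b$-terms for the \emph{original} geodesic $\G$ would require bounding $d_{\HH^2_\b}(\G_a,\G_b)$, and that quantity is \emph{not} small --- indeed, when $D_t(\b)$ is large, the twist contribution makes it comparable to $b-a$ --- which is exactly why the proof first passes to $\overline\G$ to kill the twisting around $\b$.  In short: the persistence claim is correct in spirit (and is implicit in the paper's ``immediate''), but it cannot be derived from the statement of Theorem~\ref{thm:Klarge} alone; you must excise all flat annuli at once.
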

\begin{proof} We prove the statement of the theorem; the corollary is immediate.
  The idea is that for each $t \in [a,b]$, we cut  the maximal flat annulus around $\a$ in $(S,q_t)$ out of $S$ and reglue the two boundary components, obtaining a new surface $\overline \G_t$, see Figure~\ref{fig:cutandreglue}. The surfaces $\overline \G_t$ will also move along a \Teich geodesic.  
  In particular  $d_{\T(S)}(\overline \G_a,\overline \G_b)=b-a$.
  On the other hand,  $\overline \G_t$ contains  the same  expanding cylinders round $\a$ as $ \G_t$ so that   
  $K_{\overline \G_t}(\a) = K_t(\a)$. Consideration of the rate of change of $K_t$ with time shows that the contribution to the
   change in  \Teich distance between $\overline \G_a$ and $\overline \G_b$ from the expanding cylinders is on the order of $\log (b-a)$, so that the actual  distance $b-a$ must be realized  due to changes in $\T(S_{\a})$.

 In more detail, this works as follows.
 Let $F=F_a(\a)$ be the maximal flat annulus around $\a$ in $(S,q_a)$.  
   The arcs in $F$ that are perpendicular to $\dd F$ define an
  isometry $f$ from one component of $\dd F$ to the other. Let
  $\overline \G_a$ be the surface obtained by removing $F$ and gluing
  the components of $\dd F$ together via $f$  (also making sure to preserve the marking), see the upper two surfaces in Figure~\ref{fig:cutandreglue}. Let $\overline \a$ be
  the gluing curve in $\overline \G_a$.
  Since the vertical and horizontal foliations of $q_a$ match along
  $\overline \a$, the surface $\overline \G_a$ is naturally equipped with
  vertical and horizontal foliations $\overline \nu_a^\pm$ and quadratic
  differential $\overline q_a$, which we assume is scaled to have area one.
  Let $\{\overline \G_t\}$ be the \Teich geodesic corresponding to $\overline
  q_a$ and let $\overline q_t$ be the corresponding family of quadratic
  differentials.  Then $d_{\T(S)}(\overline \G_a,\overline \G_b)=b-a$.
  Observe that the surface $\overline \G_t$ is obtained from 
  $ \G_t$ by cutting the maximal flat annulus $F_t=F_t(\a)$.  Thus,
for each $t$, we have a natural map  $\varphi_t: (S,q_t)\setminus F_t
  \to (S,\overline q_t) \setminus \overline \a$ which fixes points but  scales the metric. 
   Hence, $K_{\overline q_t}(\a)=K_t(\a) > M$ on $[a,b]$ and therefore
  $\a$ is also extremely short in $\overline \G_t$ on $[a,b]$.  
Applying
  Theorem~\ref{thm:minskyproduct}, we get
\begin{equation}
\label{eqn:product}
b-a = d_{\ts}(\overline \G_a,\overline \G_b) \add
\max \left\{ d_{\T(S_\a)}(\overline \G_a,\overline \G_b), 
d_{\HH^2_\a}(\overline \G_a,\overline \G_b) \right\}.
\end{equation}

To prove the theorem, it will suffice to establish the following two bounds:
\begin{equation}
\label{eqn:hyp}
 d_{\HH^2_\a}(\overline \G_a,\overline\G_b) \prec \log (b-a),
\end{equation}
and 
\begin{equation}
\label{eqn:pinched}
d_{\T(S_\a)}(\overline \G_a,\G_a) \prec 1, \ 
d_{\T(S_\a)}(\overline \G_b,\G_b) \prec 1.
\end{equation}
The theorem would then follow from 
Equations~(\ref{eqn:product}),(\ref{eqn:hyp}),(\ref{eqn:pinched}), 
and the triangle inequality.

\subsection*{Proof of Equation~\eqref{eqn:hyp}}
We use the estimate of distance in $\HH^2_\a$ from Equation~(\ref{eqn:distance-in-H2})
in Section~\ref{sec:minskyproduct}.  Let  $\s_t=\overline \G_t$, let
$\ell_t=l_{\s_t}(\a)$, and let $s_t$ be
the Fenchel-Nielsen twist coordinate of $\a$ at $\s_t$.
By~(\ref{eqn:distance-in-H2}) we have
$$d_{\HH^2_\a}(\s_a, \s_b)\add \frac{1}{2} \log \max \Big\{ 
|s_a - s_b|^2 \ell_a \ell_b, \frac{\ell_a}{\ell_b},
 \frac{\ell_b}{\ell_a} \Big\}.$$
 We shall to show that the contribution $|s_a - s_b|^2 \ell_a \ell_b$ coming from  the twist can be neglected.
By Lemma~\ref{lem:minskytwist}, we have for any lamination
$\xi$: 
$$
|s_a -s_b| \add |tw_{\s_a}(\xi,\a)-tw_{\s_b}(\xi,\a)|.
$$
By Proposition~\ref{prop:twistcomparison} with  $\xi=\nu^+$ (or $\xi = \nu^-$), we have
\begin{align*}
|Tw_{\s_a}(\np,\a) &- Tw_{\overline q_a}(\np,\a) | \prec  \frac{1}{\ell_a},\\
|Tw_{\s_b}(\np,\a) &- Tw_{\overline q_b}(\np,\a) | \prec \frac{1}{\ell_b} .
\end{align*}

In general, if a curve $\a$ is short on a surface $\sigma$ then, by considering the restriction to $F$, we can view  $ tw_q(\nu,\a)$ as split
 into contributions coming from the flat and the expanding annuli around $\a$.   It follows from the Gauss-Bonnet theorem that in an expanding annulus, two geodesics intersect at most once. Hence the contribution to $ tw_q(\nu,\a)$
is essentially  contained in $F(\a)$, for details see~\cite{rafi2} and the proof of Lemma 5.6 in~\cite{crs}.

In the present case, there is no flat annulus in $\overline q_t$ corresponding to
$\alpha$. Hence the twistings $Tw_{\overline q_a}(\np,\a)$ and $Tw_{\overline
 q_b}(\np,\a)$ are bounded; in fact, they are at most two.
Therefore, $|s_a - s_b|^2 \ell_a \ell_b \prec 1$ and we get
$$
d_{\HH^2_\a}(\s_a,\s_b) \add \frac{1}{2} \log \max \Big\{ 
\frac{\ell_a}{\ell_b},
 \frac{\ell_b}{\ell_a} \Big\}.$$
Since $K_{\overline q_a}(\a)= K_a(\a)$ and $K_{\overline q_b}(\a)= K_b(\a)$, 
it follows from Equation~(\ref{eqn:exponential}) that
$$\frac{\ell_a}{\ell_b} \asymp \frac{\log K_b(\a)}{\log K_a(\a)}
\prec \frac{2(b-a) + \log K_a (\a)}{\log K_a(\a)} 
\leq \frac{2(b-a)}{\log M} + 1.$$
Similarly for $\ell_b/\ell_a$, we have the identical bound. 
Thus Equation~(\ref{eqn:hyp}) is proved.

\subsection*{Proof of Equation~\eqref{eqn:pinched}} This is a consequence
of the following lemma due to Minsky:
\begin{lemma}[\cite{minskyharmonic} Lemma 8.4]
\label{lem:extremal}
Let $X$ be a closed Riemann surface and $Y \subset X$ an
incompressible subsurface. There exists a constant $m$ depending on
the topology of $X$ only, such that if each component of $\dd Y$ bounds
an annulus in $Y$ of modulus at least $m$, then for any non-peripheral simple 
closed curve
$\zeta \subset Y$,
$$\ext_Y(\zeta) \mul \ext_X(\zeta).$$
\end{lemma}
Here $\ext_Y(\zeta)$ denotes the extremal length of a curve $\zeta$ on the surface $Y$. Note that although the lemma is stated for closed surfaces, the proof works
for surfaces with punctures as well.

Continuing the proof of Equation~\eqref{eqn:pinched}, for $t=a,b$ we claim that 
\begin{equation}
\label{eqn:extremalG}
\ext_{\G_t}(\zeta) \mul \ext_{\overline \G_t}(\zeta)
\end{equation} for every
non-peripheral simple closed curve $\zeta$ in $S\setminus \a$.  Note
that if the maximal flat annulus $F_t(\a)$ at $(S,q_t)$ has modulus
bounded above by $m$, then there is a $k$-quasi-conformal homeomorphism from
$(S,q_t)$ to $(S,\overline q_t)$,
 where $k$ depends only on $m$. This automatically implies
that $d_{\ts}(\G_t,\overline \G_t) \prec 1.$

Now suppose that $\mod F_t(\a)> m$. In
order to apply Lemma~\ref{lem:extremal}, we take the following
intermediate step illustrated in Figure~\ref{fig:cutandreglue}. As usual,  $E_t(\a)$ is an expanding annulus of maximal modulus around $\a$. One component of $\dd F_t(\a)$ is the inner
boundary $\dd_0$ of $E_t(\a)$.  Let $\dd_0'$ be the other component of $\dd
F_t(\a)$ and let $A_t$ be the flat annulus contained in
$F_t(\a)$ that shares $\dd_0'$ as a boundary component and that has modulus
$m$. 
\begin{figure}[tb]
\begin{center}
\centerline{\includegraphics{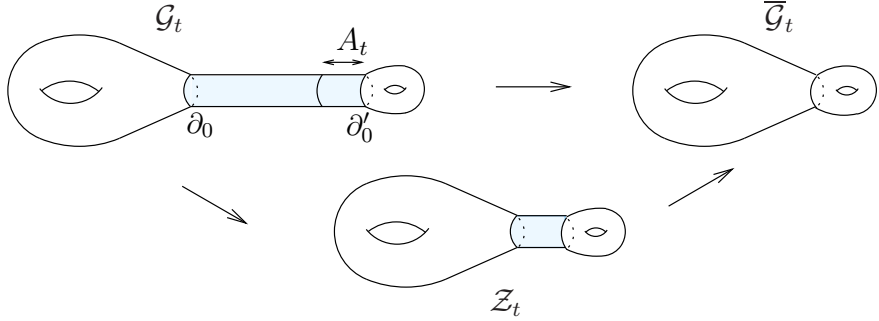}}
\caption{Cut out flat annulus and re-glue.}
 \label{fig:cutandreglue}
\end{center}
\end{figure}
Let $\cal Z_t$ be the surface which is obtained from $\G_t$ by cutting
out $F_t(\a) \setminus A_t$ and re-gluing the
boundary components together, shown as the lower surface in Figure~\ref{fig:cutandreglue}. Each
boundary component of $\cal Z_t\setminus \dd_0$ has an annulus, namely
$E_t(\a)$ and $A_t$, around it whose modulus is at least
$m$. Therefore, we can apply Lemma~\ref{lem:extremal} to $\cal
Z_t\setminus \dd_0$ as a subsurface of $\cal G_t$ and as a subsurface
of $\cal Z_t$ to obtain
$$\ext_{\G_t}(\zeta) \mul \ext_{\mathcal Z_t \setminus \dd_0}(\zeta)
\mul \ext_{\mathcal Z_t}(\zeta).
$$
Because the modulus of $A_t$ is bounded above by $m$, we have
as above that $\cal Z_t$ and $\overline \G_t$ are $k$-quasi-conformal
so that in particular,
$$\ext_{\mathcal Z_t}(\zeta) \mul \ext_{\overline \G_t}(\zeta).$$ 
This proves Equation~(\ref{eqn:extremalG}).

Thus it follows from Kerckhoff's formulation of \Teich distance
\cite{kerckasym} and Minsky's product regions theorem
that
\begin{equation*}
d_{\T(S_\a)}(\G_t, \overline \G_t) \add \frac{1}{2} \sup_{\zeta \in {\cal C} 
(S\setminus \a)} \log \frac{\ext_{\G_t}(\zeta)}{\ext_{\overline \G_t}(\zeta)}
\prec 1, 
\end{equation*}
completing the proof of Equation~\eqref{eqn:pinched}. 
\end{proof}

\section{The Main Theorem}
\label{sec:main}
 In this section we prove our main result,  Theorem~\ref{thm:qgeo0}.
To estimate $d_{\ts}(\L_a,\L_b)$ we will apply Minsky's
product regions theorem and verify in turn upper and lower bounds on the distance. We start with a lemma which will be used to estimate 
 $d_{\HH^2_\a}(\L_v,\L_w)$, where $\a$ is 
curve which is short  along an interval $[v,w]$.
 
Recall from Proposition~\ref{prop:aroundbalancetime} that
$I_\a =I_\a(\ep) $ is the maximal open interval around $t_\a$ such that 
$l_{\G_t}(\a) < \ep$ for all $t \in I_\a$. 
It follows from Theorem~\ref{thm:GLshort} that 
if a curve is sufficiently short in $\G_t$, then it is, in the coarse sense,
at least as short in $\L_t$. 
In particular, we may 
choose $\ep = \ep_1$  in Proposition~\ref{prop:aroundbalancetime}
 small enough that if $l_{\G_t}(\a) < \ep_1$, then $l_{\L_t}(\a)<\ep_0$. 
\begin{lemma}
\label{lem:HLdistance}
Let $[v,w] \subset I_\a(\ep_1)$.
\begin{itemize}
\item[(i)] If
$D_t(\a) \geq \sqrt{K_t(\a)}$ for all $t \in [v,w]$,
then $$d_{\HH^2_\a}(\L_v,\L_w) \add w-v.$$
\item[(ii)] If $\sqrt{K_t(\a)} \geq D_t(\a)$ for all $t \in
  [v,w]$, then  
$$d_{\HH^2_\a}(\L_v,\L_w) \ladd \frac{w-v}{2}.$$
\end{itemize}
\end{lemma}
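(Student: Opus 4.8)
The plan is to apply the $\HH^2$ distance formula~\eqref{eqn:distance-in-H2} with $\s_a=\L_v$, $\s_b=\L_w$, and to estimate each of the three competing quantities
$$|s_v-s_w|^2\,\ell_v\ell_w,\qquad \frac{\ell_v}{\ell_w},\qquad \frac{\ell_w}{\ell_v},$$
where $\ell_t=l_{\L_t}(\a)$ and $s_t$ is the Fenchel--Nielsen twist coordinate of $\a$ at $\L_t$. Since $[v,w]\subset I_\a(\ep_1)$, the choice of $\ep_1$ forces $\ell_t<\ep_0$ for all $t\in[v,w]$, so $\a$ is extremely short on $\L_t$ throughout and both Theorem~\ref{thm:GLshort} and Theorem~\ref{thm:Ltwist} apply. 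Under the hypothesis of (i), Theorem~\ref{thm:GLshort} gives $1/\ell_t\mul D_t(\a)=e^{-2|t-t_\a|}d_\a(\np,\nm)$; under the hypothesis of (ii) it gives $1/\ell_t\mul\sqrt{K_t(\a)}$. The twist differences are handled via Lemma~\ref{lem:minskytwist}, which reduces $s_v-s_w$ to $tw_{\L_v}(\xi,\a)-tw_{\L_w}(\xi,\a)$ for a suitably chosen $\xi\in\{\np,\nm\}$; these are then bounded by Theorem~\ref{thm:Ltwist} together with the fact that $|tw_\s(\np,\a)-tw_\s(\nm,\a)|\add d_\a(\np,\nm)$ for every $\s$.

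For part (i) the upper bound $d_{\HH^2_\a}(\L_v,\L_w)\ladd w-v$ is the easier half. From $1/\ell_t\mul D_t(\a)$ one gets $\ell_v/\ell_w,\ \ell_w/\ell_v\lmul e^{2(w-v)}$ because $\bigl||v-t_\a|-|w-t_\a|\bigr|\le w-v$. For the twist term one writes $tw_{\L_v}(\np,\a)=\bigl(tw_{\L_v}(\np,\a)-tw_{\L_v}(\nm,\a)\bigr)+tw_{\L_v}(\nm,\a)$, the bracket being $\pm d_\a(\np,\nm)+O(1)$ and the last term $\prec D_v(\a)$ (Theorem~\ref{thm:Ltwist}), while $tw_{\L_w}(\np,\a)\prec D_w(\a)$; since $D_v(\a),D_w(\a)\le d_\a(\np,\nm)$ this yields $|s_v-s_w|\prec d_\a(\np,\nm)$, and using $D_v(\a)D_w(\a)=e^{-2(w-v)}d_\a(\np,\nm)^2$ one checks $|s_v-s_w|^2\ell_v\ell_w\lmul e^{2(w-v)}$, so all three terms are $\lmul e^{2(w-v)}$. (When $\a$ is vertical or horizontal, $t_\a=\mp\infty$ and the same estimate goes through with $D_t(\a)=e^{\mp2t}\operatorname{Mod}F_0(\a)$.)

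The lower bound $d_{\HH^2_\a}(\L_v,\L_w)\gadd w-v$ is the step I expect to be the main obstacle, because neither the length ratios nor a crude twist estimate always realize it; one must play the two off against each other according to the position of $t_\a$. If $t_\a\notin(v,w)$, say $t_\a\le v$, then $D_t(\a)$ is monotone on $[v,w]$ and $\ell_w/\ell_v\mul D_v(\a)/D_w(\a)=e^{2(w-v)}$, so the length ratio alone suffices (the vertical/horizontal cases are of this type). If $v<t_\a<w$ there is no such monotonicity. Here the argument of the previous paragraph gives $|s_v-s_w|\ge d_\a(\np,\nm)\bigl(1-O(e^{-2(t_\a-v)})-O(e^{-2(w-t_\a)})\bigr)-O(1)$, so once a constant $p_0$ is fixed with the error fraction $\le\tfrac12$, the sub-case $t_\a-v\ge p_0$ and $w-t_\a\ge p_0$ gives $|s_v-s_w|\gmul d_\a(\np,\nm)$, hence $|s_v-s_w|^2\ell_v\ell_w\gmul e^{2(w-v)}$. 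In the remaining range $t_\a$ lies within $p_0$ of an endpoint, and then one of $\ell_w/\ell_v,\ \ell_v/\ell_w$ is $\mul e^{2(w-v)-4p_0}$, which still gives $\tfrac12\log(\cdot)\gadd w-v$ up to the fixed constant $2p_0$. Combining the sub-cases completes (i).

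Part (ii) requires only the upper bound and is more straightforward. First treat the one-sided situation $t_\a\le v\le w$ (and symmetrically $v\le w\le t_\a$, and the vertical/horizontal cases): with $\xi=\np$, Theorem~\ref{thm:Ltwist} gives $Tw_{\L_v}(\np,\a)\prec1/\ell_v$ and $Tw_{\L_w}(\np,\a)\prec1/\ell_w$, so Lemma~\ref{lem:minskytwist} yields $|s_v-s_w|\prec 1/\ell_v+1/\ell_w$, hence $|s_v-s_w|^2\ell_v\ell_w\lmul\ell_v/\ell_w+\ell_w/\ell_v+1$; since $1/\ell_t\mul\sqrt{K_t(\a)}$ and Equation~\eqref{eqn:exponential} bounds $\sqrt{K_v(\a)}/\sqrt{K_w(\a)}$ and its reciprocal by $e^{w-v}$, all three quantities in~\eqref{eqn:distance-in-H2} are $\lmul e^{w-v}$, giving $d_{\HH^2_\a}(\L_v,\L_w)\ladd\tfrac{w-v}{2}$. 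Finally, when $v<t_\a<w$, note $\a$ is still extremely short at $\L_{t_\a}$ (as $t_\a\in I_\a(\ep_1)$), apply the one-sided estimate to $[v,t_\a]$ (with $\xi=\nm$) and to $[t_\a,w]$ (with $\xi=\np$), and add via the triangle inequality to obtain $d_{\HH^2_\a}(\L_v,\L_w)\ladd\tfrac{t_\a-v}{2}+\tfrac{w-t_\a}{2}=\tfrac{w-v}{2}$.
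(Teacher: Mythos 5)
The proposal is correct and follows the same roadmap as the paper: reduce via formula~(\ref{eqn:distance-in-H2}) to estimating the three terms $|s_v-s_w|^2\ell_v\ell_w$, $\ell_v/\ell_w$, $\ell_w/\ell_v$; convert the Fenchel--Nielsen twist difference to a lamination twist difference via Lemma~\ref{lem:minskytwist}; bound lengths via Theorem~\ref{thm:GLshort} and twists via Theorem~\ref{thm:Ltwist}; and split by whether $t_\a$ lies outside or inside $(v,w)$. The one place you genuinely diverge is the sub-case the paper itself calls ``the interesting case,'' namely the lower bound in (i) when $v<t_\a<w$. The paper's argument there is a symmetric algebraic cancellation: four applications of Theorem~\ref{thm:Ltwist} together with the definition of $d_\a$ give
$|s_v-s_w|\ell_v\add Tw_{\L_w}(\nm,\a)\ell_v$, $|s_v-s_w|\ell_w\add Tw_{\L_v}(\np,\a)\ell_w$, $d_\a\ell_v\add Tw_{\L_v}(\np,\a)\ell_v$, $d_\a\ell_w\add Tw_{\L_w}(\nm,\a)\ell_w$, and since the first two have the same product as the last two, one gets $|s_v-s_w|^2\ell_v\ell_w\mul d_\a^2\ell_v\ell_w\mul e^{2(w-v)}$ in a single step with no further case split. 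You instead introduce a threshold $p_0$ and dichotomize on whether $t_\a$ is within $p_0$ of an endpoint, falling back to the length ratio when it is. Both approaches work; the paper's identity buys a simultaneous two-sided bound and avoids the threshold. One point you should make explicit in your version: absorbing the additive $-O(1)$ error in $|s_v-s_w|\ge d_\a\bigl(1-\cdots\bigr)-O(1)$ requires $d_\a$ to be uniformly large, which holds because $1/\ell_{t_\a}\mul D_{t_\a}(\a)=d_\a$ and $t_\a\in I_\a(\ep_1)$ forces $\ell_{t_\a}<\ep_0$; this is also implicit in the paper's manipulation of the $\add$ relations but deserves a word.
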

\begin{proof} The proof rests on the formula~\eqref{eqn:distance-in-H2} from Section~\ref{sec:minskyproduct} 
and a careful comparison of rates of change of lengths and twists.
Let $\ell_t=l_{\L_t}(\a)$, and let
$s_t$ be the Fenchel-Nielsen twist of $\a$ at $\L_t$. As in~\eqref{eqn:distance-in-H2} we have
\begin{equation} 
\label{eqn:hyp-distance}
d_{\HH^2_\a}(\L_v,\L_w) \add \frac{1}{2} \log \max 
\Big\{ |s_v - s_w|^2 \ell_v \ell_w,
\frac{\ell_v}{\ell_w}, \frac{\ell_w}{\ell_v} \Big\}.
\end{equation}
By Lemma~\ref{lem:minskytwist}, we have
\begin{equation}
\label{eqn:xi}
|s_v -s_w| \add |tw_{\L_v}(\nu^\pm,\a)-tw_{\L_w}(\nu^\pm,\a)|.
\end{equation}

First suppose $t_{\a} \leq v < w$. Then by Theorem~\ref{thm:Ltwist}
$$
|s_v -s_w|^2 \ell_v \ell_w \,{\stackrel{{}_\ast}{\prec}}\, 
\Big[ \frac{1}{\ell_v} + \frac{1}{\ell_w} \Big]^2 \ell_v \ell_w \mul
 \max \Big\{ \frac{\ell_v}{\ell_w}, \frac{\ell_w}{\ell_v} \Big\}.
$$
Therefore,
\begin{equation*}
d_{\HH^2_\a}(\L_v,\L_w) \add
 \frac{1}{2} \log \max \Big\{
\frac{\ell_v}{\ell_w}, \frac{\ell_w}{\ell_v} \Big\}.
\end{equation*}
If
$D_t(\a) \geq \sqrt{K_t(\a)}$ for all $t \in [v,w]$,
so that $1/l_{\L_t}(\a) \mul D_t(\a)$ on $[v,w]$, then
$$\max \Big\{\frac{\ell_v}{\ell_w}, \frac{\ell_w}{\ell_v} \Big\}
\mul e^{2(w-v)}.$$
 If $\sqrt{K_t(\a)} \geq D_t(\a)$ for all $t \in
  [v,w]$, so that $1/l_{\L_t}(\a) \mul \sqrt{K_t(\a)}$ on $[v,w]$, then by Equation~(\ref{eqn:exponential}) and Lemma~\ref{lem:Kdecay}, we get
$$\sqrt{K_w(\a)} \lmul \sqrt{ K_v(\a)}
\lmul e^{w-v} \sqrt{K_w(\a)},$$
from which it follows that
$$\max \Big\{\frac{\ell_v}{\ell_w}, \frac{\ell_w}{\ell_v} \Big\}
\stackrel{{}_\ast}{\prec} e^{w-v}$$
and the lemma is proved in this case.  The case where $v < w \leq t_{\a}$ 
can be handled similarly.

Now, suppose $v<t_\alpha < w$ and for convenience, 
translate so that $t_\alpha=0$. 
If $\sqrt{K_t(\a)} \geq D_t(\a)$ for all $t \in
  [v,w]$, the result follows from the triangle inequality
  $$ d_{\HH^2_\a}(\L_v,\L_w) \leq  d_{\HH^2_\a}(\L_v,\L_0) + d_{\HH^2_\a}(\L_0,\L_w)$$
and the result already proved above.

The interesting case is that in which 
$D_t(\a) \geq \sqrt{K_t(\a)}$ for all $t \in [v,w]$, in which case 
$l_{\L_t}(\a)$ decreases on $[v,0]$ but then increases again  on $[0,w]$.
This means that
$ \max\left\{ \ell_v/\ell_w ,\ell_w/\ell_v \right\}
\mul e^{2|v+w|}$
and consequently the term  $ \frac{1}{2}
 \log \max \left\{ \ell_v/\ell_w ,\ell_w/\ell_v \right\}$
does not 
reflect the total distance $w-v$.  Instead, we have to look more carefully at
the term $|s_v - s_w|^2 \ell_v \ell_w$.

We have
$$
|s_v - s_w|  \ell_v \add | tw_{\L_v}(\nm,\a)- tw_{\L_w}(\nm,\a)|  \ell_v \add  Tw_{\L_w}(\nm,\a) \ell_v 
$$
where the second equality follows from Theorem~\ref{thm:Ltwist}. 
Similarly, 
$$
|s_v - s_w|  \ell_w \add | tw_{\L_v}(\np,\a)- tw_{\L_w}(\np,\a)|  \ell_w \add  
Tw_{\L_v}(\np,\a) \ell_w. 
$$

On the other hand, writing $d_\a = \da$ for the relative twist as defined in Section~\ref{sec:background}:
$$
d_{\a}  \ell_v \add | tw_{\L_v}(\nm,\a)- tw_{\L_v}(\np,\a)|  \ell_v \add  Tw_{\L_v}(\np,\a) \ell_v 
$$
and 
$$
d_{\a}  \ell_w \add | tw_{\L_w}(\nm,\a)- tw_{\L_w}(\np,\a)|  \ell_w \add  Tw_{\L_w}(\nm,\a) \ell_w 
$$
where we again made two applications of Theorem~\ref{thm:Ltwist}. 

Also note that by definition, $D_0(\a) = d_{\a}$ so by Theorem~\ref{thm:GLshort}
we have 
$1/\ell_0 \mul d_{\a}$. Thus
$$
|s_v - s_w|^2  \ell_v \ell_w \mul d_{\a}^2 \ell_v \ell_w \mul \frac{\ell_v}{\ell_0} \frac{\ell_w}{\ell_0}=
e^{2(w-v)}.
$$
It follows by~\eqref{eqn:hyp-distance} that
$d_{\HH^2_\a}(\L_v,\L_w) \add w-v.$
 \end{proof}

Before proving our main theorem, we also establish the following rather technical 
lemma, which quantifies more precisely the schematic graphs in Figure~\ref{fig:graphs}:
\begin{lemma}
\label{lem:dichotomy}
 Let $M$ be chosen as in Theorem~\ref{thm:Klarge}.  Then there exists $\ep >0$, depending only on the topology of $S$, such that for any $a,b$ with  $[a,b] \subset I_\a(\ep) $ 
either:
\begin{itemize}
\item[(i)] $K_t(\a) > M$ on $[a,b]$; 
\vspace*{0.2cm}
\item[ or (ii)] (i) fails, $D_t(\a) \geq \sqrt{K_t(\a)}$ on a subinterval of the form $[a,u]$, and  $$\sqrt{K_b(\a)} \lmul e^{u-a};$$ 
\item[ or (iii)] (i) fails, $D_t(\a) \geq \sqrt{K_t(\a)}$ on a subinterval of the form   $[u,b]$, and  $$\sqrt{K_a(\a)} \lmul e^{b-u}.$$
\end{itemize}
 \end{lemma}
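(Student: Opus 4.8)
The plan is to fix $\ep>0$ small relative to $M$ and to the implied $\asymp$-constants, so that on $I_\a(\ep)$ the estimate $1/l_{\G_t}(\a)\asymp\max\{D_t(\a),\log K_t(\a)\}$ of Theorem~\ref{thm:GLshort} has the following consequence: whenever $K_t(\a)\le M$, the term $\log K_t(\a)\le\log M$ is negligible, so $D_t(\a)\succ 1/\ep$ and in particular $D_t(\a)$ exceeds $\sqrt{K_t(\a)}\le\sqrt M$ by as wide a margin as we please; equivalently, wherever $\sqrt{K_t(\a)}\gmul D_t(\a)$ we must have $K_t(\a)>M$. Fix such an $\ep$ and suppose (i) fails, so that there is $t_0\in[a,b]$ with $K_{t_0}(\a)\le M$, hence $D_{t_0}(\a)>\sqrt{K_{t_0}(\a)}$ with a wide margin. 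Using the symmetry $t\mapsto -t$, $\np\leftrightarrow\nm$ (which exchanges cases (ii) and (iii)), I may assume $t_0\ge t_\a$.

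Next I would propagate the inequality $D_t(\a)\ge\sqrt{K_t(\a)}$. By Lemma~\ref{lem:DandK} in contrapositive form --- if $D$ and $\sqrt K$ agreed at some $u$ with $t_\a<u\le t_0$, then $\sqrt{K_t(\a)}\gmul D_t(\a)$ would hold on $[u,b]$, contradicting the wide margin at $t_0$ --- the function $D-\sqrt K$ does not vanish on $(\max\{a,t_\a\},\,t_0]$, so $D_t(\a)\ge\sqrt{K_t(\a)}$ throughout $[\max\{a,t_\a\},t_0]$. Let $u_+\in[t_0,b]$ be maximal with $D\ge\sqrt K$ on $[t_0,u_+]$; then $D\ge\sqrt K$ on $[\max\{a,t_\a\},u_+]$. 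If $t_\a\le a$ this interval is $[a,u_+]$ and we are in case (ii) with $u=u_+$. If $t_\a>a$, let likewise $u_-\in[a,t_\a]$ be minimal with $D\ge\sqrt K$ on $[u_-,t_\a]$; then $D\ge\sqrt K$ on $[u_-,u_+]$, and if $u_-=a$ we are again in case (ii) while if $u_->a$ and $u_+=b$ we are in case (iii) with $u=u_-$. The one remaining configuration, $a<u_-<u_+<b$ with $D$ dominating only on the interior interval $[u_-,u_+]$, must be excluded. In this configuration the wide margin at $t_0$, propagated back by the rate estimate in the proof of Lemma~\ref{lem:DandK}, gives $D_{t_\a}(\a)>\sqrt{K_{t_\a}(\a)}$ strictly, so $u_-<t_\a<u_+$ and $u_\pm$ are honest crossing points; Lemma~\ref{lem:DandK} then yields $\sqrt{K_t(\a)}\gmul D_t(\a)$ on the two ``wings'' $[a,u_-]$ and $[u_+,b]$, whence $K_t(\a)>M$ there and in particular $K_{u_-}(\a),K_{u_+}(\a)>M$. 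Since $K_t(\a)$ decays away from $t_\a$ (Lemma~\ref{lem:Kdecay}), it follows that $K_t(\a)\ge\min\{K_{u_-}(\a),K_{u_+}(\a)\}>M$ on $[u_-,u_+]$ as well, so $K_t(\a)>M$ on all of $[a,b]$ --- i.e.\ (i) holds, a contradiction.

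It then remains to verify the exponential bounds, using the rate-of-change estimates from Section~\ref{sec:estimates}. In case (ii) we have $u=u_+$ and $t_0\in[a,u]$; since $\sqrt{K_t(\a)}$ changes at rate at most $e^t$ (cf.\ Equation~\eqref{eqn:exponential}) and $\sqrt{K_{t_0}(\a)}\le\sqrt M\prec 1$, we get $\sqrt{K_u(\a)}\lmul e^{u-t_0}\le e^{u-a}$, and then $\sqrt{K_b(\a)}\lmul\sqrt{K_{u_+}(\a)}\lmul e^{u-a}$ by Lemma~\ref{lem:Kdecay}(i) on $[u_+,b]$ (this step is trivial if $u_+=b$). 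Case (iii) is handled by the same argument with left and right interchanged, using $t_0\in[u_-,b]$ and Lemma~\ref{lem:Kdecay}(ii) on $[a,u_-]$.

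\textbf{Main obstacle.} The delicate step is excluding the configuration in which $D_t(\a)$ dominates only on an interval interior to $[a,b]$: this requires playing the length estimate of Theorem~\ref{thm:GLshort} (which turns $\sqrt K$-domination at a short time into the bound $K>M$) against the unimodality of $K_t(\a)$ from Lemma~\ref{lem:Kdecay}. The other point requiring care is purely arithmetic: $\ep$ must be chosen small enough, relative to $M$ and to all the implicit constants, that every ``large versus small'' dichotomy used above holds as stated.
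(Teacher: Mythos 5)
Your proof is correct and follows essentially the same strategy as the paper: choose $\ep$ small enough that, for $t\in I_\a(\ep)$, the condition $\sqrt{K_t(\a)}\gmul D_t(\a)$ forces $K_t(\a)>M$; propagate the sign of $D-\sqrt K$ using Lemma~\ref{lem:DandK}; rule out the configuration where $D$ dominates only on an interval interior to $[a,b]$ via the unimodality in Lemma~\ref{lem:Kdecay}; and extract the exponential bound from the rate estimate~\eqref{eqn:exponential}. Your organization (locating $t_0$ with $K_{t_0}\le M$, reducing by symmetry to $t_0\ge t_\a$, and taking maximal intervals $[u_-,u_+]$ of $D$-domination) is a cosmetic variant of the paper's case split on crossing points and the location of $t_\a$; the one point to watch is that Lemma~\ref{lem:Kdecay} gives $K_t\gmul\min\{K_{u_-},K_{u_+}\}$ only up to a multiplicative constant, so the choice of $\ep$ must absorb that constant — exactly the adjustment the paper makes by ``reducing $\ep>0$ if necessary.''
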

\begin{proof} By Theorem~\ref{thm:GLshort}, we can choose $\ep>0$ 
  small enough
that if $t \in I_{\a}(\ep)$ and 
if $\sqrt{K_t(\a)} \geq D_t(\a)$ then $K_t(\a) > M$.  Thus if (i) fails, we must have 
$D_w(\a) >  \sqrt{K_w(\a)}$ for some  $w \in (a,b)$.

Suppose first that $D_t(\a) >  \sqrt{K_t(\a)}$ on $[a,b]$, and that  (i) fails, so that   there
is some $c \in [a,b]$ where $K_c(\a) \leq M$.  To check   (ii)  holds,  we have only to verify its final statement.  Since $M$ is fixed, it follows from
Equation~\eqref{eqn:exponential} that
$$ \sqrt{K_b(\a)} \lmul  e^{b-c} \sqrt{K_c(\a)} \lmul e^{b-a}.$$
(By the same  argument, (iii) also holds in this case.)

Now suppose that $D_u(\a) = \sqrt{K_u(\a)}$ for some $u \in (a,b)$.
We claim   that if  $t_\a \notin [a,b]$ then (i) holds.
Suppose for definiteness that $t_{\a} < a$. By Lemma~\ref{lem:Kdecay} we have $K_t(\a) \gmul K_u(\a)$ on $[a,u]$,
and by Lemma~\ref{lem:DandK} we have $\sqrt{K_t(\a)} \gmul D_t(\a)$ on $[u,b]$. Hence
 $1/l_{\L_t}(\a) \asymp \sqrt{K_t(\a)}$ on $[u,b]$.
Therefore, reducing  $\ep>0$ if necessary, 
we can again ensure   $K_t(\a) >M$ on
$[a,b]$ and (i) holds as claimed.

Suppose now that $D_u(\a) = \sqrt{K_u(\a)}$ for some $u \in [a,b]$ and that  $t_\a \in [a,b]$, say for definiteness that
 $t_\a < u$.  
If there is another point $u' \in
[a,t_\a]$ such that $D_{u'}(\a) = \sqrt{K_{u'}(\a)}$, then again with a suitable adjustment of $\ep$ we have
$K_t(\a) > M$ on  $[a,b]$ (see Figure~\ref{fig:graphs}) and we are in case (i). 
If  there is no such point
$u'$, then $D_t(\a) \geq \sqrt{K_t(\a)}$ on $[a,u]$. Assuming that  in addition (i) fails, there is a point $c \in [a,b]$
where $K_c(\a) \leq M$. 
By Lemma~\ref{lem:DandK} we have $1/l_{\L_t}(\a) \mul
\sqrt{K_t(\a)}$ on $[u,b]$. Assuming $\ep$ is sufficiently
small, we deduce that $c \in [a,u]$. Then
by Lemma~\ref{lem:Kdecay} we have
$$
\sqrt{K_c(\a)} \gmul e^{-|t_\a - c|} \sqrt{K_{t_\a}(\a)} 
\gmul e^{-|t_\a - c|} \sqrt{K_b(\a)} 
\geq e^{-(u-a)} \sqrt{K_b(\a)} $$
and we are in case (ii).   
The case where $t_\a > u$ is handled similarly and results in
  (iii).
\end{proof}

We are now ready to prove our main result Theorem~\ref{thm:qgeo0}.

\medskip

  \noindent \textbf{Proof of Theorem~\ref{thm:qgeo0}.} As noted in the introduction, we prove the theorem by obtaining separate upper and lower bounds for $d_{\T(S)}(\L_a,\L_b)$. The upper bound is relatively straightforward but the lower bound requires an inductive procedure based on Lemma~\ref{lem:dichotomy}.
  
In order to compare two surfaces $\L_a,\L_b$  at the ends of a long interval $[a,b] \subset \RR$, it is convenient to consider separately the curves which are short at $a$ but not at $b$, those which 
are short at  $b$ but not at $a$, and those which are short at both. More precisely, 
choose $\ep$ to satisfy Lemma~\ref{lem:dichotomy} and then choose $\ep'\leq \ep$ as in Proposition~\ref{prop:aroundbalancetime}
so that    if $ l_{\G_t}(\a) < \ep'$ then $t \in I_\a (\ep)$.
In particular, if $l_{\G_a}(\a) < \ep'$ and $l_{\G_b}(\a) < \ep'$, then
since $I_\a(\ep)$ is connected,
$[a,b] \subset I_\a(\ep)$.  
Now define subsets $\Gamma_a, \Gamma_b$, and $\Gamma$ of the curves of length less than $\ep'$ in either $\G_a$ or $\G_b$ as
 follows:
\begin{align}
\label{eqn:Gammas}
 \Gamma_a  &= \{ \a \in \cal C (S) : l_{\G_a}(\a) < \ep',
\ l_{\G_b}(\a) \geq \ep' \}, \nonumber \\
 \Gamma_b  &= \{ \a \in \cal C (S) : l_{\G_b}(\a) < \ep',
\ l_{\G_a}(\a)\geq \ep' \},\\
 \Gamma \hspace{0.1cm} &= \{ \a \in \cal C (S) : l_{\G_t}(\a) < \ep',
\ \mbox{ for } t =a,b\}.  \nonumber 
\end{align}

We begin by establishing some preliminary estimates on distances in the \Teich spaces of the  subsurfaces obtained by cutting along these curves.
By Minsky's product regions theorem,
\begin{align*}
d_{\T(S_\Gamma)}(\L_a,\G_a) &\add \max_{\a \in \Gamma_a}
[\, d_{\T(S_{\Gamma \cup \Gamma_a})}(\L_a, \G_a),
 d_{\HH^2_\a}(\L_a,\G_a) ],\\
d_{\T(S_\Gamma)}(\L_b,\G_b) &\add \max_{\a \in \Gamma_b}
[\, d_{\T(S_{\Gamma \cup \Gamma_b})}(\L_b, \G_b),
\, d_{\HH^2_\a}(\L_b,\G_b) ].
\end{align*}
Now on the one hand, by Theorem~\ref{thm:GLdistance},
the thick parts  
of $\G_a$ 
and $\L_a$ are bounded distance from one another, as are the 
those of $\G_b$ and $\L_b$. Therefore
$$d_{\T(S_{\Gamma \cup \Gamma_a})}(\L_a, \G_a) \prec 1
\quad\text{and}\quad
d_{\T(S_{\Gamma \cup \Gamma_b})}(\L_b, \G_b) \prec 1.$$
On the other hand, because the twisting is bounded as in 
Theorems~\ref{thm:gtwist} and \ref{thm:Ltwist}, we have
\begin{align*}
d_{\HH_\a^2}(\L_a, \G_a) &\ladd 
\frac{1}{2} \log \frac{l_{\G_a}(\a)}{l_{\L_a}(\a)} <  \frac12 \log \frac 1{l_{\L_a}(\a)}
\quad\text{for}\quad \a \in \Gamma_a, \\
d_{\HH_\a^2}(\L_b, \G_b) &\ladd 
\frac{1}{2} \log \frac{l_{\G_b}(\a)}{l_{\L_b}(\a)} <
\frac12 \log \frac 1{l_{\L_b}(\a)}
\quad\text{for}\quad \a \in \Gamma_b.
\end{align*} 
Thus it follows that
\begin{equation}
\label{eqn:GLab}
\begin{array}{c}
\displaystyle
d_{\T(S_\Gamma)}(\L_a, \G_a) \ladd  \frac12 \max_{\a \in \Gamma_a} \,
\log \frac 1{l_{\L_a}(\a)}, \\[5mm]
\displaystyle
d_{\T(S_\Gamma)}(\L_b, \G_b) \ladd  \frac12 \max_{\a \in \Gamma_b} \,
\log \frac 1{l_{\L_b}(\a)}.
\end{array}
\end{equation}

We now turn to bounding the distance $d_{\T(S)}(\L_a, \L_b)$.
By Minsky's product regions theorem, 
\begin{equation}
\label{eqn:product2}
d_{\T(S)}(\L_a,\L_b) \add 
\max _{\a \in \Gamma} \Big[ d_{\T(S_\Gamma)}(\L_a,\L_b), 
d_{\HH^2_\a}(\L_a,\L_b) \Big].
\end{equation}

\subsection*{Upper bound } 
We prove the upper bound $d_{\ts}(\L_a,\L_b) \ladd 3(b-a)$ by bounding the terms on the right hand side of ~\eqref{eqn:product2}.

By  Lemma~\ref{lem:HLdistance} we have
$d_{\HH^2_\a}(\L_a,\L_b) \ladd b-a$ for each $\a \in \Gamma$.
We provide an upper bound for $d_{\T(S_\Gamma)}(\L_a,\L_b)$
using the triangle inequality
\begin{equation}
\label{eqn:triangle}
d_{\T(S_\Gamma)}(\L_a,\L_b) \leq d_{\T(S_\Gamma)}(\L_a,\G_a) +
d_{\T(S_\Gamma)}(\G_a,\G_b) + d_{\T(S_\Gamma)}(\L_b,\G_b).
\end{equation}

To bound the first and last terms of the right hand side, we will use~\eqref{eqn:GLab} and the fact that the length $l_{\L_t}(\a)$
of a curve increases at rate at most $e^{2t}$. More precisely,
notice that
if $\a \in \Gamma_a$ then $I_\a \cap [a,b] = [a,c)$ for some $c \leq b$.
By definition of $I_\a$, we have $l_{\G_c}(\a)=\ep$.
Then it follows from Theorem~\ref{thm:GLshort} that 
$l_{\L_c}(\a)$ is bounded below by a uniform constant
that depends only on $\ep$.
Therefore, by the observation following Equation~\eqref{eqn:exponential}, we have
$$
\log \frac{1}{l_{\L_a}(\a)} \ladd  \log \frac{l_{\L_c}(\a)}{l_{\L_a}(\a)}
 \ladd 2(b-a). 
$$
Similarly, if $\a \in \Gamma_b$, then
$$
\log \frac{1}{l_{\L_b}(\a)} \ladd 2(b-a). 
$$
Therefore, from~\eqref{eqn:GLab} it follows that
$$d_{\T(S_\Gamma)}(\L_a, \G_a) \ladd b-a \quad\text{and}\quad
 d_{\T(S_\Gamma)}(\L_b, \G_b) \ladd b-a.
 $$
The second term in~\eqref{eqn:triangle} is bounded by
Minsky's product regions theorem:
$$d_{\T(S_\Gamma)}(\G_a,\G_b) \ladd d_{\T(S)}(\G_a,\G_b) = b-a.$$ 
This finishes the proof of the upper bound.

 \subsection*{Lower bound}

We prove the lower bound  $d_{\ts}(\L_a,\L_b) \gadd (b-a)/4$ by showing that at least one of the terms in the right hand side
of~\eqref{eqn:product2} is bounded below by $(b-a)/4$.

We begin by reducing the problem to a consideration of the curves in $\Gamma$ only.
It follows from a theorem of Wolpert  \cite{wolpert} that for every $\gamma
\in \cal C (S)$,
$$d_{\ts}(\L_a,\L_b) \geq \frac{1}{2} \Bigg| \log \frac{l_{\L_b}(\gamma)}{
l_{\L_a}(\gamma)}\Bigg|.$$
It follows as before from Theorem~\ref{thm:GLshort} 
that if $l_{\G_t}(\a) \geq \ep'$, then the length $l_{\L_t}(\a)$ 
is uniformly bounded below. 
Therefore, we have:
\begin{equation}
\label{eqn:gammaab}
\begin{array}{c}
\displaystyle
 \Bigg| \log \frac{l_{\L_b}(\a)}{l_{\L_a}(\a)} \Bigg| \gadd
\log \frac{1}{l_{\L_a}(\a)} \mbox{ for } \a \in \Gamma_a, \\[5mm]
\displaystyle
 \Bigg| \log \frac{l_{\L_a}(\a)}{l_{\L_b}(\a)} \Bigg| \gadd
\log \frac{1}{l_{\L_b}(\a)} \mbox{ for } \a \in \Gamma_b.
\end{array}
\end{equation}
It follows from the triangle inequality that if either
$$\max_{\a \in \Gamma_a}\log \frac{1}{l_{\L_a}(\a)} \geq \frac{b-a}{2}
\ \mbox{ or }
\max_{\a \in \Gamma_b} \log \frac{1}{l_{\L_b}(\a)} \geq \frac{b-a}{2},$$
the lower bound is proved.  

Thus we may assume that 
\begin{equation}
\label{eqn:assumption}
 \max_{\a \in \Gamma_a}\log \frac{1}{l_{\L_a}(\a)} \leq  \frac{b-a}{2}
\ \mbox{ and }
\max_{\a \in \Gamma_b} \log \frac{1}{l_{\L_b}(\a)} \leq \frac{b-a}{2},
\end{equation}
bringing us to the key part of the proof.
From Minsky's product region theorem, we have 
\begin{equation}
\label{eqn:productG}
 b-a=d_{\T(S)}(\G_a,\G_b) \add
 \max _{\a \in \Gamma}[ d_{\T(S_\Gamma)}(\G_a,\G_b), d_{\HH^2_\a}(\G_a,\G_b)].
\end{equation}
We claim that either $d_{\T(S_\Gamma)}(\G_a,\G_b) \add b-a$, or that
there is some $\a \in \Gamma$ such that $d_{\HH^2_\a}
(\G_a,\G_b) \add b-a$
and such that Lemma~\ref{lem:dichotomy} (ii) or (iii) holds. After proving the claim, we will show that either alternative implies the required bound on $d_{\ts}(\L_a,\L_b) $.
We are going to use an inductive argument for which it is important to note than we can choose the additive constant in Minsky's product
regions theorem to be fixed for all surfaces obtained from $S$ by cutting out
 any 
subset of curves in $\Gamma$.

If the maximum in~\eqref{eqn:productG} is
realized by $d_{\T(S_\Gamma)}(\G_a,\G_b)$, then
obviously $d_{\T(S_\Gamma)}(\G_a,\G_b) \add b-a$. 
 If the maximum in~\eqref{eqn:productG} is realized by $d_{\HH^2_\gamma}
 (\G_a,\G_b)$
for some $\gamma \in \Gamma$,
consider the alternatives for $\gamma$ in Lemma~\ref{lem:dichotomy}.  
If (i) holds, 
then by  Theorem~\ref{thm:Klarge} we have
$d_{\T(S_\gamma)}(\G_a,\G_b) \add b-a$.
In this case, we apply Minsky's product
regions theorem to $S_\gamma$, giving
\begin{equation}
\label{eqn:product3}
 b-a \add d_{\T(S_\gamma)}(\G_a,\G_b) \add
 \max _{\delta \in \Gamma \setminus \gamma}[ d_{\T(S_\Gamma)}
 (\G_a,\G_b), d_{\HH^2_\delta}(\G_a,\G_b)].
\end{equation}

Now  repeat the same argument; if the maximum in~\eqref{eqn:product3}
is realized by 
$d_{\HH^2_\delta}(\G_a,\G_b)$ for some $\delta \in \Gamma \setminus \gamma$
that satisfies Lemma~\ref{lem:dichotomy} (i), then apply the
product regions theorem to $S_{\{\gamma,\delta\}}$.  Eventually, up to a finite number of changes to the additive constants, 
either there must be some
$\a \in \Gamma$ for which $d_{\HH^2_\a}(\G_a, \G_b) \add b-a$ such that
Lemma~\ref{lem:dichotomy} (i) does not hold, or it must be that $d_{\T(S_\Gamma)}(\G_a,\G_b) \add b-a$.
The claim follows.

Now we show that either alternative implies the required bound. 
If $d_{\T(S_\Gamma)}(\G_a,\G_b) \add b-a$, then
the triangle inequality, Equation~\eqref{eqn:GLab},
and the assumption~\eqref{eqn:assumption} give
\begin{align*}
d_{\T(S_\Gamma)}(\L_a,\L_b)  &\gadd 
d_{\T(S_\Gamma)}(\G_a,\G_b) - d_{\T(S_\Gamma)}(\L_a,\G_a) -
d_{\T(S_\Gamma)}(\L_b,\G_b) \\
&\gadd d_{\T(S_\Gamma)}(\G_a,\G_b) -\frac{1}{2} \max_{\a \in \Gamma_a}
\log \frac{1}{l_{\L_a}(\a)}
-\frac{1}{2} \max_{\a \in \Gamma_b} \log \frac{1}{l_{\L_b}(\a)}
\\
&\add \frac{b-a}{2}. 
\end{align*}

Now assume the alternative: that there is some $\a \in \Gamma$ such that
$d_{\HH^2_\a}(\G_a, \G_b) \add b-a$ and
such that Lemma~\ref{lem:dichotomy} (ii) or (iii) holds. 
Assume (ii) holds: we have 
that $D_t(\a) \geq \sqrt{K_t(\a)}$ on an interval
$[a,u]$ and consider the following  two cases depending on the length of
$[a,u]$. (Case (iii) can be handled similarly.)

 If
$u-a \geq (b-a)/2$, then the triangle inequality and 
Lemma~\ref{lem:HLdistance} give
\begin{align*}
d_{\HH^2_\a}(\L_a,\L_b) &\geq d_{\HH^2_\a}(\L_a,\L_u) - d_{\HH^2_\a}(\L_u,\L_b)
\\ &\gadd (u-a) - \frac{b-u}{2} \\
& \geq \frac{b-a}{4}.
\end{align*}
(Strictly speaking, it may be that $\sqrt{K_t(\a)} < D_t(\a)$ for some values of
$t \in [u,b]$. However, Lemma~\ref{lem:DandK} implies that
$1/l_{\L_t}(\a) \mul \sqrt{K_t(\a)}$ on $[u,b]$, and this is sufficient to guarantee
that $d_{\HH^2_\a}(\L_u,\L_b) \ladd (b-u)/2 $,
see the proof of Lemma~\ref{lem:HLdistance}).  

If $u-a < (b-a)/2$, then consider the
triangle inequality 
$$d_{\HH^2_\a}(\L_a,\L_b) \geq d_{\HH^2_\a}(\G_a,\G_b) -
d_{\HH^2_\a}(\G_a,\L_a) - d_{\HH^2_\alpha}(\G_b,\L_b).$$
Similarly to our previous argument, since the twisting is bounded as in Theorems~\ref{thm:gtwist}
and~\ref{thm:Ltwist}, we have 
$$
d_{\HH^2_\a}(\G_a,\L_a) \ladd \frac{1}{2} \log \frac{l_{\G_a}(\a)}{l_{\L_a}(\a)}
\quad\text{and}\quad
d_{\HH^2_\a}(\G_b,\L_b) \ladd \frac{1}{2} \log \frac{l_{\G_b}(\a)}{l_{\L_b}(\a)}.
$$
Since $D_a(\a) \geq \sqrt{K_a(\a)}$ it follows from Theorem~\ref{thm:GLshort} that
$$\log \frac{l_{\G_a}(\a)}{l_{\L_a}(\a)} \mul 1.$$ 
Since Lemma~\ref{lem:dichotomy} (ii) holds,
it follows from  the assumption 
$u-a < (b-a)/2$ that
$$
\log \frac{l_{\G_b}(\a)}{l_{\L_b}(\a)} <
\log \frac{1}{l_{\L_b}(\a)} \ladd u-a 
< \frac{b-a}{2}. 
$$
Thus, in this case we have
$$d_{\HH^2_\a}(\L_a,\L_b) \gadd \frac{3}{4} (b-a).$$
This concludes the proof. \hfill $\Box$
 


\end{document}